\numberwithin{equation}{section}
\theoremstyle{plain}
\newtheorem{theorem}{Theorem}
\newtheorem*{theorem*}{Theorem}
\newtheorem{proposition}{Proposition}[section]
\newtheorem{lemma}{Lemma}[section]
\newtheorem{corollary}{Corollary}[section]
\newtheorem*{lemma*}{Lemma}
\theoremstyle{definition}
\newtheorem*{remark}{Remark}
\def\eps{\varepsilon}
\begin{document}

\title{New Numerical Invariants of an Unfolding \\ of a Polycycle ``Tears of the Heart''}

\date{}

\author[1]{Yu.~S.~Ilyashenko}
\author[2]{S.~Minkov}
\author[1]{I.~Shilin}

\affil[1]{HSE University, Moscow, Russia}
\affil[2]{Brook Institute of Electronic Control Machines, Moscow, Russia}

\maketitle

\begin{flushright}
	\textit{In memory of Mikhail Alexandrovich Shubin\\ a brilliant mathematician and a dear friend}
\end{flushright}

\begin{abstract}
 In this paper new numerical invariants of structurally unstable vector fields in the plane are found.
  One of the main tools is an improved asymptotics of sparkling saddle connections that occur when a separatrix loop of a hyperbolic saddle breaks. Another main tool is a new topological invariant of two arithmetic progressions, both perturbed and unperturbed, on the real line. For the pairs of the  unperturbed arithmetic progressions we give a complete topological classification.
\medskip

\noindent

\end{abstract}

%
%
%
%
\section{Introduction}

\subsection{Tears of the heart}

In \cite{IKS} an open set of structurally unstable three-parameter families of planar vector fields was found.
These families are unfoldings of a polycycle ``tears of the heart'' shown in Fig.~\ref{fig:th}.

Let $v_0$ be a vector field whose phase portrait is shown in Fig.~\ref{fig:th}.
Let L and M be hyperbolic saddles with characteristic numbers $\lambda$ and $\mu$ that satisfy $\lambda < 1, \; \lambda^2\mu > 1$.
Then the number $A = \frac {-\ln \lambda}{\ln (\lambda^2 \mu)}$ is a topological invariant of a generic three-parameter unfolding of~$v_0$, \cite{IKS}.

Here we present other invariants of the same unfolding. To describe them, we need some preparations.

\subsection{New invariants}

Let $v_0$ and $A$ be the same as before.
Let $\gamma_2$ be the whole polycycle ``tears of the heart'', and $\gamma_1 \subset \gamma_2$ be the separatrix loop. Both
polycycles are monodromic, $\gamma_1$ from the inside and $\gamma_2$ from the outside. Let $\Gamma$  be a transversal to the saddle loop, and $\Gamma_1$, $\Gamma_2$  be its interior and exterior parts with respect to $\gamma_1$.
Let $\Delta_1 \colon \Gamma_1 \to \Gamma_1$ ($\Delta_2 \colon \Gamma_2 \to \Gamma_2$) be the monodromy map (the inverse monodromy map) corresponding to $\gamma_1$ (resp., $\gamma_2$). There exist $C^1$-smooth charts $x_{1,2}$ on $\Gamma_{1,2}$ such that the monodromy maps $\Delta_j$ have the form $x_j \mapsto C_jx_j^{\nu_j}$ in the coordinates $x_1, x_2$ respectively, with $\nu_1 = \lambda$ and $\nu_2 = (\lambda^2 \mu)^{-1}$. For a map
$\Delta: x \mapsto C x^\nu$ and a small $B > 0$ we set
\begin{equation}\label{eqn:beta}
  \beta (\Delta, B) = \ln \left(\frac {\ln C}{1 - \nu} - \ln B\right).
\end{equation}
Let $B_1$ (resp., $B_2$) be the $x_1$ (resp., $x_2$) -coordinate of the intersection point of the separatrix of an interior saddle~$I$ with
$\Gamma_1$ (of a saddle $E$ with~$\Gamma_2$). Let $\beta_j = \beta (\Delta_j, B_j),\; j = 1,2$. The subsequent theorem was essentially (though not verbatim) proven in~\cite{GK:20}:

\begin{figure}
 \begin{center}
\includegraphics[width=0.5\textwidth]{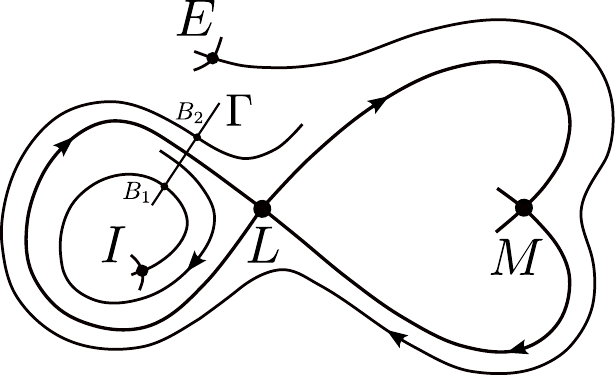}
\end{center}
\caption{ The polycycle ``tears of the heart'' with an exterior and interior saddles }\label{fig:th}
 \end{figure}

\begin{theorem}[Goncharuk and Kudryashov,~\cite{GK:20}]\label{thm:inv1}
 The equivalence class of the ratio ${\tau = \frac {\beta_1-\beta_2}{\ln \nu_2}}$,
$$
   \tau \pmod{1,A},
$$
is an invariant of the topological classification of generic three-parameter unfoldings of the vector field~$v_0$, in the case when $A$ is irrational.
\end{theorem}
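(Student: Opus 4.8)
The plan is to reconstruct the argument of \cite{GK:20}: to show that the topological type of the unfolding near the polycycle is governed by a combinatorial invariant — the way two sequences of separatrix traces on $\Gamma$ interleave — and that, transported into a doubly-logarithmic chart, this invariant becomes the interleaving pattern of two arithmetic progressions whose common differences have ratio $A$ and whose phase shift is $\tau$.

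First I would pass to dynamics on the transversal. On $\Gamma_1$ the marked point $B_1$ (the trace of the separatrix of $I$) has a $\Delta_1$-orbit $\{\Delta_1^{n}(B_1)\}_{n\in\mathbb Z}$ which accumulates on the fixed point of $\Delta_1$ on one side and on $\Gamma\cap\gamma_1$ on the other; similarly $B_2$ has a $\Delta_2$-orbit on $\Gamma_2$. In a generic perturbation the separatrix connection $\gamma_1$ is destroyed and the former loop separatrix of $L$ winds around $\gamma_1$ (or $\gamma_2$), meeting $\Gamma$ at infinitely many points; a secondary (``sparkling'') saddle connection of $L$ with $I$, resp.\ with $E$, occurs precisely when its trace on $\Gamma_1$ (resp.\ $\Gamma_2$) hits a point of the $\Delta_1$-orbit of $B_1$ (resp.\ the $\Delta_2$-orbit of $B_2$). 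Hence, for every pair $(n,m)$, whether the $n$-th connection of $I$-type occurs nearer to, or farther from, the $\gamma_2$-locus than the $m$-th connection of $E$-type is decided by the relative position of $\Delta_1^{n}(B_1)$ and $\Delta_2^{m}(B_2)$, read in a coordinate gluing $\Gamma_1$ to $\Gamma_2$ at $\Gamma\cap\gamma_1$. A topological equivalence of unfoldings carries the bifurcation diagram to the bifurcation diagram and preserves every topologically distinguished object — the nested polycycles $\gamma_1\subset\gamma_2$, the extra saddles $I$ and $E$ and their separatrices, the monodromy direction and hence the orientation of $\Gamma$ and the stable/unstable dichotomy — so it must preserve this combinatorial arrangement, up to discarding finitely many initial terms of either orbit.

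Next I would identify the arrangement. In the chart $x_j$ one has $\Delta_j\colon x_j\mapsto C_jx_j^{\nu_j}$, so in the doubly-logarithmic coordinate $\xi_j=\ln\bigl(\tfrac{\ln C_j}{1-\nu_j}-\ln x_j\bigr)$ the map becomes the translation $\xi_j\mapsto\xi_j+\ln\nu_j$, and the orbit of $B_j$ becomes the arithmetic progression $\{\,n\ln\nu_j+\beta_j:n\in\mathbb Z\,\}$ — which is exactly why $\beta_j$ is defined by \eqref{eqn:beta}. Near $\Gamma\cap\gamma_1$ both progressions are present, with common differences $\ln\nu_1=\ln\lambda$ and $\ln\nu_2$ whose ratio is $\ln\lambda/\ln\nu_2=A$, and with offsets $\beta_1,\beta_2$; after dividing all positions by $\ln\nu_2$ the configuration is the union of $\beta_2/\ln\nu_2+\mathbb Z$ and $\beta_1/\ln\nu_2+A\mathbb Z$. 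Its combinatorial (Sturmian) type depends only on $A$ and on $\tfrac{\beta_1-\beta_2}{\ln\nu_2}\bmod(\mathbb Z+A\mathbb Z)=\tau\bmod(1,A)$, the ambiguity $\mathbb Z+A\mathbb Z$ being precisely the effect of discarding initial orbit terms ($\beta_2\mapsto\beta_2+\ln\nu_2$ shifts $\tau$ by $1$, $\beta_1\mapsto\beta_1+\ln\lambda$ shifts it by $A$). Conversely, when $A$ is irrational the Sturmian type of such a pair of progressions, equipped with the canonical orientation inherited from $\Gamma$, determines the phase class $\tau\bmod(1,A)$ — this is the announced topological classification of pairs of arithmetic progressions and follows from the three-distance theorem together with the standard rigidity of Sturmian words. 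Combining the two directions, any topological equivalence of unfoldings forces $\tau\bmod(1,A)=\tau'\bmod(1,A')$ (the equality $A=A'$ being the invariant of \cite{IKS}).

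The principal obstacle is the first step. One must: (a) establish the normal form $x_j\mapsto C_jx_j^{\nu_j}$ in a $C^1$ chart \emph{uniformly} over the parameter, so that the bifurcation structure of the family is faithfully reproduced by the perturbed return maps; (b) control these perturbed return maps and the winding of the broken separatrix of $L$ sharply enough to locate the sparkling connections and to verify that their ordering is genuinely the interleaving of the two orbits — here the delicate point is that the doubly-logarithmic rescaling must be shown compatible with the topological equivalence, since a bare homeomorphism of $\Gamma$ need not respect it, and one has to exploit the extra rigidity coming from having orbit structure on \emph{both} sides of $\Gamma\cap\gamma_1$ at once; and (c) carry out the bookkeeping inside a $3$-parameter family, singling out the two parameter directions that unfold the two separatrix connections of $\gamma_2$ and using genericity to guarantee that the sparkling connections are non-degenerate and transverse. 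Once (a)--(c) are in place, the computation of the preceding paragraph gives the theorem.
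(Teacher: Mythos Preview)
Your overall strategy---reduce to a pair of almost-arithmetic progressions and invoke their order-type classification---matches the paper's, and your second paragraph correctly computes the progressions and the role of $\beta_j$, $A$, $\tau$. The substantive difference is \emph{where} you put the two progressions. You place them in phase space, on the two half-transversals $\Gamma_1$ and $\Gamma_2$, as the $\Delta_j$-orbits of $B_j$, and propose to interleave them by ``gluing $\Gamma_1$ to $\Gamma_2$ at $\Gamma\cap\gamma_1$''. The paper instead places both sequences in \emph{parameter} space: in the one-parameter subfamily that breaks only the loop, the $I$-type sparkling connections occur at parameter values $i_n$ and the $E$-type at $e_m$; the asymptotics~\eqref{eqn:ass2} and its analogue for the outer polycycle give $\ln(-\ln i_n)=-n\ln\nu_1+\beta_1+o(1)$ and $\ln(-\ln e_m)=-m\ln\nu_2+\beta_2+o(1)$. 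Now both sequences sit on one and the same line---the double-log of the parameter---and the base homeomorphism $h$ is an order-preserving map of that line sending each sequence to its counterpart, so the classification of pairs of slightly perturbed progressions (Section~\ref{sec:seq} and the subsection following it) applies directly.

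Your phase-space gluing is the genuine gap, and you rightly flag it in your item~(b). The doubly-logarithmic charts $\xi_1,\xi_2$ live on disjoint half-transversals that meet only at a single point; there is no canonical way to overlay them, and the linking homeomorphism acts on $\Gamma$ only through the fibre maps $H_\alpha$, which have no reason to respect any particular identification $\xi_1\leftrightarrow\xi_2$. What \emph{is} canonical is the parameter $\varepsilon$, common to both types of connection, and formula~\eqref{eqn:ass2} is precisely the device that transports the $\Delta_j$-orbit structure into that common chart. Your heuristic ``the order of $i_n$ versus $e_m$ is decided by the relative position of $\Delta_1^n(B_1)$ and $\Delta_2^m(B_2)$'' becomes, once made precise, exactly this asymptotic formula; so your items (a)--(c), carried out in parameter space rather than on $\Gamma$, turn into the paper's proof.
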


\begin{remark}
The coordinate $B_j$ is not at all well defined; it may be replaced by $x_j(\Delta^k_j(B_j))$ for an arbitrary~$k$. But an easy calculation shows that this replacement changes $\frac {\beta_1-\beta_2}{\ln \lambda}$ by $As + p$ for some integers $s$ and $p$, and thus preserves the equicalence class.

We include the proof of Theorem~\ref{thm:inv1} for reader's convenience since it is short and serves as an important preliminary step in the proof of our main result.
\end{remark}

Moreover, in~\cite{GK:20} the invariance of  $\lambda$ and $\mu$  was conjectured for generic families. The positive answer to the conjecture is the main result of this paper.

\begin{theorem} \label{thm:inv2}
  There exists a set of locally topologically generic 3-parameter families of vector fields on the sphere with the following properties. Each family in the set is an unfolding of a vector field with a polycycle ``tears of the heart''. The characteristic numbers $\lambda$ and $\mu$ of the saddles $L$ and $M$, the vertices of the polycycle, are topological invariants of the families from this set. 
\end{theorem}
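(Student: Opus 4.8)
\medskip
\noindent\textbf{Proof strategy.}

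The plan is to read $\lambda$ and $\mu$ off the fine structure of two families of \emph{sparkling saddle connections} — those born when the inner loop $\gamma_1$ is broken, governed by $\nu_1=\lambda$, and those born when the outer loop $\gamma_2$ is broken, governed by $\nu_2=(\lambda^2\mu)^{-1}$ — and to show that enough of this structure is visible in the purely topological classification. Theorem~\ref{thm:inv1} already recovers one combination of $\lambda,\mu$ (namely $A$, together with $\tau$); the new input is a second, independent topological quantity coming from a refinement of the asymptotics underlying that theorem.

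First I would make the bifurcation picture precise. For a generic $3$-parameter unfolding $\{v_\eps\}$, $\eps\in(\mathbb R^3,0)$, and $\eps$ in a suitable wedge, the broken loop $\gamma_1$ carries a first-return correspondence that is a composition of the power-like saddle correspondence $\Delta_1\colon x_1\mapsto C_1(\eps)x_1^{\lambda}$ with a near-identity gluing determined by the splitting of $\gamma_1$ and by the position $B_1(\eps)$ of the separatrix of the interior saddle $I$. Writing out the fixed-point equation ``$\Delta_1^{\,n}(\mathrm{entrance})=\mathrm{exit}$'' produces a sequence of codimension-one \emph{sparkling hypersurfaces} $\Sigma_1=\{\Sigma_1^{(n)}\}\subset(\mathbb R^3,0)$ of connections incident to $L$, accumulating at the origin; the analogous construction at $\gamma_2$ gives $\Sigma_2=\{\Sigma_2^{(m)}\}$, with $\nu_2$ in place of $\nu_1$. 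Since the two families of connections are topologically distinguishable (different incident saddles, different isotopy type of the connecting arc), any topological equivalence of families carries $\Sigma_1\mapsto\Sigma_1'$ and $\Sigma_2\mapsto\Sigma_2'$; hence the germ at $0$ of the pair $(\Sigma_1,\Sigma_2)$, up to a homeomorphism of $(\mathbb R^3,0)$, is a topological invariant of the family, and so is its trace on any curve $t\mapsto\eps(t)$ transverse to all of these hypersurfaces.

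Second, the analytic heart: the \emph{improved asymptotics}. On such a transversal one obtains two sequences $\{t_k^{(j)}\}\to 0$, and I would prove
\[
  \ln t_k^{(j)} \;=\; k\,\ln\nu_j \;+\; \beta_j \;+\; r_j(k),\qquad r_j(k)\to 0 ,
\]
with an explicit description of the leading term of $r_j(k)$ — so that $\{\ln t_k^{(j)}\}$ is a \emph{perturbed arithmetic progression} whose step $\ln\nu_j$ and constant term $\beta_j$ of \eqref{eqn:beta} recover the Goncharuk--Kudryashov data, while $r_j(k)$ decays at a rate that is a further function of $\lambda$ (for $j=1$) and of $\lambda,\mu$ (for $j=2$), \emph{not} a function of the step alone. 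This is a Dulac-type estimate: iterate the nonlinear, $\eps$-dependent correspondence $k$ times using the $C^1$-charts of the statement and sharp bounds for the saddle transition, controlling the accumulation of remainders through all $k$ steps. I expect this, and not anything topological, to be the principal obstacle, since earlier treatments stop at the $\beta_j$ level and it is precisely the refined term that must survive conjugacy.

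Third, I would set up the promised topological classification of pairs of arithmetic progressions and apply it. For germs at $0$ of two sequences $X,Y\to 0^+$ the only datum preserved by an orientation-preserving homeomorphism of $(\mathbb R,0)$ is the tail of the ``cutting word'' recording the interleaving of $X$ with $Y$. For unperturbed progressions in logarithmic scale this word is Sturmian: its slope is $\ln\nu_1/\ln\nu_2=A$, and its intercept is $\tau$, recoverable modulo the rotation orbit, i.e. modulo $(1,A)$; this reproves Theorem~\ref{thm:inv1}, which the excerpt's Remark asks us to include, and gives the complete classification in the unperturbed case. For the \emph{perturbed} progressions coming from the improved asymptotics the word deviates from its Sturmian skeleton on a sparse set whose location is controlled by $r_1,r_2$; reading off this set yields a new invariant $\rho$ — again defined only modulo a discrete ambiguity (generated by $1$, $A$, and the $\beta_j$-shift described in the Remark, all of which must be tracked). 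Finally I would check that on a locally topologically generic set of families — an open set as in \cite{IKS}, with a meager set of arithmetic and nondegeneracy exceptions removed — the data $(A,\rho)$, now intrinsic to the family's topological type, determine $(\lambda,\mu)$ uniquely: $A=\frac{-\ln\lambda}{\ln(\lambda^2\mu)}$ gives one relation and $\rho$ an algebraically independent one. The subtle point here is quantitative: the correction $r_j(k)$ is macroscopically visible in the cutting word only if, infinitely often, the gap between the two progressions near level $k$ is comparable to $r_j(k)$ — a Diophantine condition on $A$, which is why an arithmetic genericity assumption cannot be dropped.
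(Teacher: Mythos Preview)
Your global architecture matches the paper --- two sparkling-connection sequences, refined asymptotics, interleaving analysis, arithmetic genericity --- but two concrete points are wrong. First, the asymptotic is off by one logarithm: the sequences become perturbed arithmetic progressions only in the \emph{double}-logarithmic chart, where (Lemma~\ref{lem:taylor})
\[
\ln\bigl(-\ln \eps^{(j)}_n\bigr)=-n\ln\nu_j+\beta_j+\theta_j\,\nu_j^{\,n}+o(\nu_j^{\,n}).
\]
Second, and this is the substantive gap: in that chart the step of the $j$-th progression is $-\ln\nu_j$ and the base of the exponential correction is $\nu_j=e^{-\mathrm{step}_j}$, so the decay rate of $r_j$ \emph{is} a function of the step alone, contrary to your claim. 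There is no ``independent rate'' from which to build a second coordinate $\rho$ and then invert $(A,\rho)\mapsto(\lambda,\mu)$; the only extra information lives in the \emph{coefficients} $\theta_j$, through the combination $\Xi=\theta_2\,\nu_2^{\,\tau}-\theta_1$.

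The paper therefore does not extract an intrinsic invariant from a single family and invert; it argues by direct comparison. If two families are equivalent, then after the shift bookkeeping of Corollary~\ref{cor:shifts} the equivalence $i_n<e_m\Leftrightarrow\tilde i_{n+s}<\tilde e_{m+p}$ forces $An+\tau-m$ to avoid, for all large $(n,m)$, an interval whose endpoints are asymptotic to $\Xi\lambda^n$ and $\tilde\Xi\,\tilde\lambda^{\,n}$. If $\lambda\ne\tilde\lambda$ (or $\Xi\ne\tilde\Xi$) this interval contains $[q^2\Xi\lambda^n,\,q\Xi\lambda^n]$ for some rational $q$, and the genericity hypothesis --- which is not a Diophantine condition on $A$ alone but a ``shift-exp-Liouvillian'' condition on the whole tuple $(A,\gamma,\delta-\beta,\Xi,\lambda)$, requiring $A-m/n\in\bigl(\delta-\beta+[q^2\Xi\lambda^n,q\Xi\lambda^n]\bigr)/(\gamma n)$ for infinitely many $(n,m)$ and every such $q$ --- produces a contradiction. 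This forces $\lambda=\tilde\lambda$ directly; $\mu=\tilde\mu$ then follows from the invariance of $A$. The condition is residual but Lebesgue-null, exactly as you anticipated.
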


{\bf Definition.} The number $$\Xi=\Bigl(\frac{\ln C_1}{1-\nu_1}-\ln B_1\Bigr)^{-1}\Bigl(\frac{\ln C_2}{1-\nu_2}-\frac{\ln C_1}{1-\nu_1}\Bigr)$$
is called \emph{the relative scale coefficient} of the family.

\begin{theorem}\label{thm:inv3} For the families in Theorem~\ref{thm:inv2}, the relative scale coefficient $\Xi$ defines an invariant in the following sense:
$$
\ln |\Xi| \mod \ln\nu_1
$$
is an invariant of the topological classification.
\end{theorem}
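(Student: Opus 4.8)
The plan is to read $\ln|\Xi|$ off the bifurcation diagram of the family, sharpening the mechanism behind Theorem~\ref{thm:inv1}. Write $\xi_j^{\ast}=\tfrac{\ln C_j}{1-\nu_j}$ for the logarithm of the nonzero fixed point of $\Delta_j$: in the logarithmic chart $\ln x_j$ the map $\Delta_j$ is the affine contraction $\xi\mapsto\nu_j\xi+\ln C_j$ with fixed point $\xi_j^{\ast}$, and by~\eqref{eqn:beta} one has $e^{\beta_j}=\xi_j^{\ast}-\ln B_j$. Consequently
\[
\ln|\Xi|=\ln\bigl|\xi_2^{\ast}-\xi_1^{\ast}\bigr|-\beta_1 ,
\]
so $\ln|\Xi|$ measures the offset $\xi_2^{\ast}-\xi_1^{\ast}$ between the logarithmic charts attached to the two parts $\Gamma_1,\Gamma_2$ of the transversal $\Gamma$, relative to the scale $e^{\beta_1}$ fixed by the coordinate $B_1$ of the separatrix of $I$ on $\Gamma_1$ --- whence the name \emph{relative scale}. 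The task is to show that this number is recoverable from the topological type modulo $\ln\nu_2$, which refines the information $\tfrac{\beta_1-\beta_2}{\ln\nu_2}\bmod(1,A)$ extracted in Theorem~\ref{thm:inv1}.

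The mechanism is the sparkling saddle connections. When $\eps$ leaves the loop‑splitting stratum, $\gamma_1$ opens, $L$ acquires two free separatrix branches whose first returns to $\Gamma$ are governed by small perturbations of $\Delta_1$ and $\Delta_2$, and a saddle connection appears each time these branches --- or one of them together with a separatrix of $I$ or of $E$ --- meet on $\Gamma$. Along a parameter family transverse to the loop‑splitting stratum, these connections occur on a countable collection of curves that is encoded, up to the relevant ambiguities, in the topological type of the bifurcation diagram together with the ordering combinatorics of the connections along each curve. The first and decisive step is to establish \emph{improved asymptotics} of these curves: the proof of Theorem~\ref{thm:inv1} uses only their leading affine part, a pair of arithmetic progressions with common differences $\ln\nu_1,\ln\nu_2$ whose relative phase gives $\tau$; I would carry the expansion one term further and identify the next‑order datum of the resulting pair of \emph{perturbed} arithmetic progressions with $\ln|\Xi|$ --- the splitting of $\gamma_1$ introduces the offset $\xi_2^{\ast}-\xi_1^{\ast}$ between the two charts, the coordinate $B_1$ sets the scale $e^{\beta_1}$ of the progression attached to $\Gamma_1$, and their ratio $\Xi$ is the relative scale of the two progressions.

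Granting the improved asymptotics, invariance follows as in Theorem~\ref{thm:inv1}. A topological equivalence $H$ between two families in the set of Theorem~\ref{thm:inv2} maps the loop‑splitting stratum to the loop‑splitting stratum, the saddles $L,M$ to the corresponding saddles --- with equal characteristic numbers, hence equal $\nu_1,\nu_2$, by Theorem~\ref{thm:inv2} --- and sparkling connections to sparkling connections, preserving their incidence and ordering. Passing to the transversal, the transverse parameter family, and the logarithmic charts, $H$ carries the pair of perturbed arithmetic progressions of the first family to that of the second; by the invariant of pairs of perturbed arithmetic progressions constructed in this paper, all their data are matched, except for (i) the $C^1$‑chart, normalization and transversal ambiguities, under which $\ln\bigl|\xi_2^{\ast}-\xi_1^{\ast}\bigr|-\beta_1$ is unchanged, and (ii) a shift of the enumeration of the connections, which moves the phase of the relevant progression, hence $\ln|\Xi|$, by an integer multiple of its period $\ln\nu_2$. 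Therefore $\ln|\Xi|\bmod\ln\nu_2$ agrees for the two families, which is Theorem~\ref{thm:inv3}.

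The main obstacle is the improved asymptotics together with the proof that the new invariant of the two progressions genuinely detects $\Xi$ rather than only $\tau$. This demands composing the Dulac maps near $L$ and $M$ with the regular transition maps while keeping the subleading terms that the proof of Theorem~\ref{thm:inv1} is free to drop, controlling them uniformly as $\eps$ tends to the loop‑splitting stratum, and verifying that the contribution of the separatrices of the auxiliary saddles $I$ and $E$ is exactly the combination of $\xi_2^{\ast}-\xi_1^{\ast}$ and $e^{\beta_1}$ above and nothing more. A related, subtler point is to determine precisely which re‑enumerations of the connections a homeomorphism can realize, so that the residual ambiguity is exactly $\ln\nu_2$ and not a larger (and, since $A$ is irrational, dense) subgroup of $\mathbb{R}$; here the asymmetry between the progression accumulating on $\gamma_1$ and the one accumulating on the full polycycle $\gamma_2$ is what selects $\ln\nu_2$.
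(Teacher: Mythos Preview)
Your overall architecture matches the paper's: the improved asymptotics (Lemma~\ref{lem:ass1}, proved in general as Lemma~\ref{lem:taylor}) turns the sparkling-connection parameters into a pair of exponentially perturbed arithmetic progressions $(i_n),(e_n)$, a moderate equivalence carries this pair to $(\tilde i_n),(\tilde e_n)$ with fixed integer enumeration shifts $s,p$ (Corollary~\ref{cor:shifts}), and one then argues that the subleading data must agree up to $\nu_2^s$.

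The genuine gap is in your diagnosis of the ``main obstacle.'' You frame it as an analytic problem---keeping subleading terms in the Dulac compositions and checking that the contribution of $I,E$ is exactly $\xi_2^\ast-\xi_1^\ast$ versus $e^{\beta_1}$---and then assert that ``by the invariant of pairs of perturbed arithmetic progressions constructed in this paper, all their data are matched.'' But no such invariant exists for arbitrary pairs. Whether a homeomorphism of the line can distinguish $(i_n,e_m)$ from $(\tilde i_n,\tilde e_m)$ at the exponential level depends on whether there are infinitely many $(n,m)$ for which $An+\tau-m$ falls into a window of size $\sim\Xi\lambda^n$; this is a Diophantine condition on the coefficients, not a matter of normalizing charts. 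The ``families in Theorem~\ref{thm:inv2}'' are exactly the \emph{shift-exp-Liouvillian} ones, defined by requiring that for every rational $q\in[0.5,1)\cup(1,2]$ there exist arbitrarily large $n,m$ with $A-m/n\in(\gamma n)^{-1}\bigl(u+[\,q^2\Xi\lambda^n,\,q\Xi\lambda^n]\bigr)$. This set is topologically residual (Lemma~\ref{lem:generic}) but Lebesgue-null in the coefficient space, so for measure-typical families your conclusion is simply false. The paper's actual extraction (Section~\ref{sec:exp}) uses this hypothesis directly: if $\lambda\neq\tilde\lambda$ or $\Xi\neq\tilde\Xi\,\nu_2^{s}$, one chooses $q$ so that $[\,q^2\Xi\lambda^n,\,q\Xi\lambda^n]\subset[Q_1(n,m),Q_2(n,m)]$ for large $n$, and the shift-exp-Liouvillian condition then supplies infinitely many $(n,m)$ on which $i_n,e_m$ and $\tilde i_{n+s},\tilde e_{m+p}$ are oppositely ordered, a contradiction. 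Your outline never names this hypothesis or this mechanism, and without it the step ``all their data are matched'' does not follow.

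A smaller correction: the restriction of the ambiguity to $\mathbb Z\ln\nu_2$ rather than the dense group $\mathbb Z\ln\nu_1+\mathbb Z\ln\nu_2$ is not an asymmetry-of-polycycles heuristic. It comes from Corollary~\ref{cor:shifts}: for irrational $A$ the shifts $(s,p)$ with $\tau-\tilde\tau=As+p$ are unique, and the explicit computation of $\Xi_2$ in Section~\ref{sec:exp} shows that only the factor $\nu_2^{s}$ survives.
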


\begin{remark} This invariant appears to be suspiciously non-symmetric, but this is not a problem. Consider another relative scale coefficient, where the roles of the two polycycles are interchanged:
$$
\Theta=\Bigl(\frac{\ln C_2}{1-\nu_2}-\ln B_2\Bigr)^{-1}\Bigl(\frac{\ln C_1}{1-\nu_1}-\frac{\ln C_2}{1-\nu_2}\Bigr).
$$
Then one has
$$
\ln |\Xi|-\ln|\Theta|=\beta_2-\beta_1.
$$

Now consider two topologically equivalent families with ratios $\tau,
\tilde\tau$ in Theorem~\ref{thm:inv1} such that
$$
\tau-\tilde\tau=As+p, \quad s,p \in \mathbb{Z}.
$$
One can prove (see Sec.~\ref{sec:exp} below)
that
$$
\ln |\Xi|-\ln|\tilde\Xi|=s\ln\nu_1.
$$
Then a direct calculation shows that
$$
\ln |\Theta|-\ln|\tilde\Theta|=p\ln\nu_2.
$$
Thus, $\ln|\Xi| \pmod{\ln\nu_1}$ and $\ln |\Theta| \pmod{\ln\nu_2}$
are representations of the same new invariant.
\end{remark}

We would like to stress the difference between the metric and the topological genericity. Consider not a functional space, but rather the real line. Diophantine irrational numbers are metrically typical (prevalent): they have the full Lebesgue measure. On the other hand, the Liouvillian numbers form a countable intersection of open and dense sets, that is, are topologically typical. Analogously to this case, our locally generic families have invariant $A$ that admits exponentially close approximations by rational numbers, but with some extra shift that depends on the other coefficients of the family, see Section 4.1, but if we considered a finitely dimensional space of these coefficients, the families of Theorems~\ref{thm:inv2} and~\ref{thm:inv3} would correspond to a subset of Lebesgue measure zero.

\subsection{Improved asymptotics of sparkling saddle connections} \label{sec:improved}

\begin{figure}\label{fig:sl}
 \begin{center}
\includegraphics[width=0.6\textwidth]{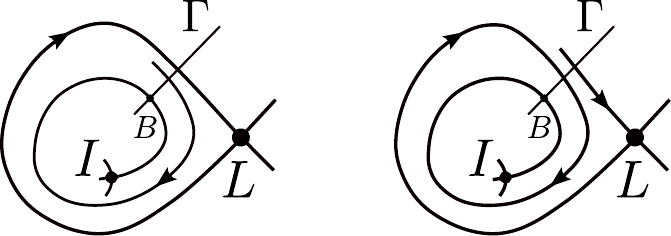}
\end{center}
\caption{ Breaking of a separatrix loop and sparkling saddle connections}
 \end{figure}

 Consider a separatrix loop of a hyperbolic saddle~$L$ with characteristic number $\lambda < 1$. Suppose that inside the loop there is another saddle~$I$ whose separatrix winds towards the separatrix loop of $L$ in the negative time.
Consider an unfolding of this vector field in a generic one-parameter family with a parameter~$\eps$. Then for a countable number of values $\eps = \eps_n \to 0$ sparkling saddle connections between $L$ and $I$ will occur: a vector field corresponding to $\eps_n$ will have a separatrix connection that makes ``$n$ turns around the vanished separatrix loop''.

In \cite{IKS} the following asymptotics for $\varepsilon_n$ was given:
\begin{equation}\label{eqn:ass0}
  \ln (-\ln \varepsilon_n) = - n \ln \lambda + O(1).
\end{equation}
 In~\cite{GK:20} an improved version of the asymptotics appeared:
\begin{equation}\label{eqn:ass2}
    \ln (-\ln \varepsilon_n) = - n \ln \lambda +\beta + o(1),
  \end{equation}
where $\beta$ is given by~\eqref{eqn:beta}.
Below we obtain yet another improvement of this formula.
\begin{lemma}\label{lem:ass1}
  \label{thm:assy} Let the monodromy map of the saddle loop above have the form $\Delta(x) = C x^\lambda$, and let the monodromy map for the perturbed saddle loop have the form $Cx^\lambda+\eps$.
  Let $B$ be the $x$-coordinate of a point where the corresponding semitransversal intersects with the separatrix of the saddle~$I$. Then
  \begin{equation}\label{eqn:ass1}
    \ln (-\ln \varepsilon_n) = - n \ln \lambda +\beta + \theta \lambda^n + o(\lambda^{n}).
  \end{equation}
  where $\beta $ is as in~\eqref{eqn:beta} and $\theta = - e^{-\beta} \frac {\ln C}{1 - \lambda}$.
\end{lemma}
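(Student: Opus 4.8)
The plan is to track the sparkling saddle connection condition explicitly as an equation in $n$ and $\varepsilon$, expand it to one more order than in \cite{GK:20}, and solve by iteration. The sparkling connection at $\varepsilon = \varepsilon_n$ occurs when the forward orbit of the point $B$ under the perturbed monodromy map $\Delta_\varepsilon(x) = Cx^\lambda + \varepsilon$ lands, after $n$ iterations, on the image of the other separatrix — i.e. when $\Delta_\varepsilon^{\,n}(B) = B'$ for an appropriate constant $B'$ (the second semitransversal intersection), or more precisely when the $n$-th iterate reaches the fixed-point region of $\Delta_\varepsilon$. The standard reduction (already implicit in the cited works) is that $\varepsilon_n$ is determined by requiring that $n$ steps of the affine-in-$\varepsilon$, power-in-$x$ map $\Delta_\varepsilon$ carry $B$ to an $O(1)$ quantity; equivalently, linearizing near the would-be fixed point $x_\varepsilon^* \sim \left(\tfrac{\varepsilon}{1-\ ?}\right)$-type scale, the passage time is $n \approx \tfrac{-\ln(\text{something})}{-\ln\lambda}$.

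**The key computational step** is to introduce the logarithmic chart and the "distance to the loop" coordinate. Writing $u = -\ln x$ (so small $x$ is large $u$), the unperturbed map $x \mapsto Cx^\lambda$ becomes $u \mapsto \lambda u - \ln C$, an affine contraction toward the fixed point $u_0 = \tfrac{\ln C}{1-\lambda}$ wait—toward $u_0 = \tfrac{-\ln C}{\lambda - 1} = \tfrac{\ln C}{1-\lambda}$. Hmm, sign: $\lambda<1$ so $1-\lambda>0$; if $C<1$ then $\ln C<0$ and $u_0<0$, which is fine formally. The perturbation $\varepsilon$ shifts the effective fixed point. Set $w = u - u_0 = -\ln x - \tfrac{\ln C}{1-\lambda}$; then the unperturbed dynamics is exactly $w \mapsto \lambda w$, and $B$ has coordinate $w_B = -\ln B - \tfrac{\ln C}{1-\lambda} = -\bigl(\tfrac{\ln C}{1-\lambda} - \ln B\bigr) \cdot(-1)$; note $e^{-\beta} = \left(\tfrac{\ln C}{1-\lambda} - \ln B\right)^{-1} = -1/w_B$. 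The perturbed map, in the $w$-chart, is $w \mapsto \lambda w + \varphi(w,\varepsilon)$ where $\varphi$ is smooth, $\varphi(w,0)=0$, and $\partial_\varepsilon\varphi(w,0) = e^{w+u_0}/\text{(nonzero)}$—actually the first-order term in $\varepsilon$ contributes the "kick" that governs when the orbit escapes. The sparkling condition $\varepsilon = \varepsilon_n$ is then, to leading order, $\lambda^n w_B + (\text{accumulated }\varepsilon\text{-kicks}) = O(1)$, giving $\ln(-\ln\varepsilon_n) = -n\ln\lambda + \beta + o(1)$ as in \eqref{eqn:ass2}. The refinement comes from retaining the \emph{first correction} to the accumulated kick: the orbit spends most of its time near $w=0$ where the kick per step is $\approx \partial_\varepsilon\varphi(0,0)\,\varepsilon$, but during the first few steps it is at $w \approx \lambda^k w_B$, contributing a correction proportional to $\varepsilon \cdot \sum_k (e^{\lambda^k w_B} - 1) = O(\varepsilon \cdot w_B)$, and when this is fed back through $\ln(-\ln(\cdot))$ it produces exactly a term of size $\lambda^n$ with coefficient $\theta = -e^{-\beta}\tfrac{\ln C}{1-\lambda} = w_B^{-1}\cdot w_B \cdot$ wait: $\theta = -e^{-\beta}\tfrac{\ln C}{1-\lambda}$, and $e^{-\beta} = (\tfrac{\ln C}{1-\lambda}-\ln B)^{-1}$, so $\theta$ is $\tfrac{\ln C}{1-\lambda}$ divided by $(\ln B - \tfrac{\ln C}{1-\lambda})$ — a clean ratio of the two "scales." So I would: (1) set up the $w$-chart and the exact recursion $w_{k+1} = \lambda w_k + \varepsilon\,g(w_k) + O(\varepsilon^2)$; (2) write the closed form $w_n = \lambda^n w_0 + \varepsilon\sum_{k=0}^{n-1}\lambda^{n-1-k} g(w_k) + O(n\varepsilon^2)$; (3) impose the exit condition $w_n = c^* + o(1)$ for the appropriate constant $c^*$ (determined by the separatrix of $I$, i.e. by $B$—this is where $B$ re-enters, and in fact $c^*$ may be taken to be $0$ up to the $o(1)$, with the genuine dependence hidden in $g$); (4) solve for $\varepsilon_n$: $\varepsilon_n \asymp \lambda^n / \sum_k \lambda^{-k}g(w_k)$, take $-\ln$, then $\ln(-\ln(\cdot))$, Taylor-expanding $\ln(1+x)$ once more to extract the $\theta\lambda^n$ term.

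**The main obstacle** I anticipate is controlling the correspondence/monodromy map $\Delta_\varepsilon$ to sufficient precision — specifically, justifying that the semitransversal correspondence map is $Cx^\lambda + \varepsilon + (\text{higher order in }x\text{ and }\varepsilon)$ with error terms that, after the double-logarithm and the iteration over $n \to \infty$ steps, are genuinely $o(\lambda^n)$ and do not swamp the $\theta\lambda^n$ term. This requires (a) a careful normal-form or Takens/Dulac-type estimate for the saddle passage near $L$ showing the $C^1$-chart really linearizes the leading behavior with controlled remainder, and (b) bookkeeping that the sum $\sum_{k=0}^{n-1}\lambda^{n-1-k}(\text{remainder at step }k)$ telescopes to something $o(\lambda^n)$ — which works because the remainders decay geometrically and the weights $\lambda^{n-1-k}$ are largest exactly where the orbit is deepest and the remainder is smallest. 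A secondary subtlety is the precise meaning of "$B$ is the $x$-coordinate where the semitransversal meets the separatrix of $I$": one must check the iteration is started/stopped consistently with this normalization so that the constant inside $\beta$ is literally $\tfrac{\ln C}{1-\lambda} - \ln B$ and not that shifted by an untracked $O(1)$; this is the same bookkeeping as in the Remark after Theorem~\ref{thm:inv1} and should be handled by the same argument. Once these estimates are in place, the extraction of $\theta = -e^{-\beta}\tfrac{\ln C}{1-\lambda}$ is a one-line Taylor expansion.
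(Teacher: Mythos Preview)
Your sketch has the right target but two concrete problems in the setup, and the paper's argument is organized differently.

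\textbf{Direction of iteration.} The connection equation is $f_\eps^{\,n+1}(0)=B$, i.e.\ $f_\eps^{\,n}(\eps)=B$: one iterates the perturbed monodromy \emph{forward from the point $\eps=f_\eps(0)$} until it lands on~$B$, not forward from~$B$. In your $w$-chart (with the sign fixed: $u_0=-\tfrac{\ln C}{1-\lambda}$, so $w_B=e^\beta>0$), the orbit starts at the huge value $w_0=w_\eps=-\ln\eps+\tfrac{\ln C}{1-\lambda}$ and contracts to $w_n=w_B$; your picture ``first few steps at $w\approx\lambda^k w_B$, then most of the time near $w=0$'' has this reversed, and with your exit condition $w_n\approx 0$ one would get $\eps_n\sim\lambda^n$ rather than the doubly-exponential decay.

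\textbf{Error control in the Duhamel step.} In the $w$-chart the perturbed map is $w\mapsto\lambda w-\ln\bigl(1+\eps\,\kappa\,e^{\lambda w}\bigr)$ with $\kappa=C^{-1/(1-\lambda)}$, so $\partial_\eps\varphi(w,0)\propto e^{\lambda w}$ is \emph{unbounded} as $w\to\infty$. Hence the remainder ``$O(n\eps^2)$'' is not justified: the kick at step $k$ (where $w_k\approx\lambda^k w_\eps$) is of order $\eps^{\,1-\lambda^{k+1}}$, not $O(\eps)$. The bookkeeping can be rescued---the accumulated correction to $w_n$ turns out to be $O(\eps_n^{\,1-\lambda})=o(\lambda^{2n})$---but not by the first-order Taylor expansion you wrote.

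\textbf{How the paper proceeds.} The paper bypasses perturbation-tracking altogether. On $\{x\ge\eps\}$ it sandwiches $f_\eps(x)=Cx^\lambda+\eps$ between two pure power maps $g_\eps(x)=(C-k\eps^{1-\lambda})x^\lambda$ and $h_\eps(x)=(C+k\eps^{1-\lambda})x^\lambda$, whose $n$-th iterates are explicit: $D^{(1-\lambda^n)/(1-\lambda)}x^{\lambda^n}$. Solving $g_{\eps_n}^{\,n}(\delta_n)=B$, $h_{\eps_n}^{\,n}(\mu_n)=B$ gives $\mu_n<\eps_n<\delta_n$; taking $\ln(-\ln\,\cdot\,)$ of both bounds yields
\[
-n\ln\lambda+\ln\!\Bigl(\tfrac{1-\lambda^n}{1-\lambda}\ln D-\ln B\Bigr),
\]
and a one-step Taylor expansion of this in $\lambda^n$ and in $D-C=O(\eps_n^{1-\lambda})$ produces $\beta+\theta\lambda^n+o(\lambda^n)$. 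Once you fix the direction, your unperturbed $w$-chart computation is exactly this Taylor expansion; the sandwich is what replaces (and cleanly justifies) your Duhamel step.
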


A similar result holds for sparkling saddle connections of an arbitrary monodromic hyperbolic polycycle provided that a single separatrix connection is broken with nonzero speed, and we are going to apply this result to the unfolding of our ``tears of the heart'' polycycle. Namely,
consider a hyperbolic polycycle $\gamma$ that contains a separatrix loop of a saddle $L$. Suppose that this polycycle is monodromic on the outside and there is a saddle~$E$ whose separatrix winds towards $\gamma$ in the positive time.
Consider an unfolding of this vector field in a one-parameter family, parametrized by~$\eps$, that preserves all the edges of the polycycle unbroken, except for the separatrix loop. Then for a countable number of values $\varepsilon = \varepsilon_n \to 0$ sparkling saddle coonnections that involve saddles $E$ and $L$ will occur. The asymptotics of $\eps_n$ is the same: relations~\eqref{eqn:ass1} and~\eqref{eqn:ass2} hold, but with $\lambda$ replaced by the inversed product of the characteristic numbers of the saddles along the polycycle. The inversion uppears because we apply the analog of Lemma~\ref{lem:ass1} in the negative time.

These sparkling sepatatrix connections form the main obstacle for two families to be topologically equivalent.
The homeomorphism of the parameter bases that establishes the equivalence of these families must map the parameter values that correspond to connections between $L$ and $E$ to values where analogous connections occur in the second family. The same holds for the connections between $L$ and the interior saddle~$I$.

Hence, the homeomorphism of the bases must take two distinguished sequences on the parameter line into two corresponding sequences for another family. This condition is the main source of invariants. The asymptotics~\eqref{eqn:ass0} shows that in the double-logarithmic chart the sequences resemble arithmetic progressions. The number $A = \frac {-\ln \lambda}{\ln (\lambda^2 \mu)}$ is the ratio of their steps that can be interpreted as the limit relative frequency of the two sequences, which is preserved by a homeomorphism. A finer asymptotics~\eqref{eqn:ass2} yields another invariant under the assumption that $A$ is irrational. With improved asymptotics and yet another genericity assumption, we prove that the characteristic numbers $\lambda$ and $\mu$ themselves are also invariants.

\subsection{Estimates of the monodromy mapping}
The following technical lemma will play an important role in proving Theorems~\ref{thm:inv2} and~\ref{thm:inv3}.

\begin{lemma} \label{lem:est}
Consider a local one-parametric unfolding of a monodromic hyperbolic polycycle such that only one separatrix connection of the original polycycle is broken with non-zero speed with respect to the parameter $\eps$.  
Let $\nu < 1$ be the product of the characteristic numbers of the saddles encountered along the polycycle and assume that these characteristic numbers are irrational. 
Then there exists a $C^1$-smooth chart for the first return map $f_\varepsilon$ of the perturbed polycycle such that in this chart we have
\begin{itemize}
    \item[1)] $f_0(x)=Cx^\nu$;
    \item[2)] the map $f_\eps$ admits the following estimates in a domain $\{(x,\eps)\mid \varepsilon > 0, \; 1>x>\varepsilon/2\}$\footnote{We assume that for the positive values of $\eps$ the unstable separatrix involved in the broken connection goes to the ``monodromic side'' of the broken polycycle, so that the sparkling connections become possible.}:

\[
(C-k(\eps))x^\nu<f_\eps(x)<(C+k(\varepsilon))x^\nu,
\]
with $k(\varepsilon)=O(\eps^{1-\nu})$.
\end{itemize}
\end{lemma}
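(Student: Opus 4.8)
The plan is to reduce the first-return map of the perturbed polycycle to a composition of a regular (Dulac-type) part and the single broken connection, and to track the $\eps$-dependence through this composition. First I would set up coordinates on the transversal so that the broken connection contributes the affine term: after the unstable separatrix involved in the broken connection leaves its saddle, it hits the incoming transversal at a point whose coordinate depends on $\eps$ with nonzero speed, so in a suitable $C^1$ chart this part of the correspondence map has the form $y\mapsto y+\eps$ (up to rescaling $\eps$). All the other saddle-to-saddle passages and regular transitions along the polycycle are unbroken, hence for $\eps$ small they depend $C^1$-smoothly on $\eps$ and, at $\eps=0$, their composition is the unperturbed first return map, which by the standard theory of monodromic hyperbolic polycycles is $C^1$-linearizable to $x\mapsto Cx^\nu$ with $\nu$ the product of the characteristic numbers. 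This gives item 1) and fixes the chart.

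Next I would write $f_\eps = g_\eps \circ S_\eps \circ h_\eps$, where $h_\eps$ is the part of the holonomy from the base transversal up to the ``entrance'' of the broken connection, $S_\eps: y\mapsto y+\eps$ is the broken connection itself, and $g_\eps$ is the remaining holonomy back to the base transversal. The key quantitative point is that $h_\eps$ and $g_\eps$ are $C^1$ in $(x,\eps)$ and $h_\eps(x)$ behaves like a power $x^{\nu'}$ near $0$ for the partial product $\nu'$ of characteristic numbers traversed, with $\nu'\le\nu<1$; in particular $h_\eps(x)=O(x^{\nu'})$ uniformly, and $h_\eps$ has a strictly positive lower power bound of the same type. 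Therefore on the domain $x>\eps/2$ the additive perturbation $\eps$ in $S_\eps$ is, relative to $h_\eps(x)$, of size $O(\eps/h_\eps(x))=O(\eps \cdot x^{-\nu'})$; but to get the clean multiplicative bound $f_\eps(x)=(C\pm k(\eps))x^\nu$ one does not estimate pointwise in $x$ but rather observes that inserting $+\eps$ at an intermediate transversal, then flowing through the remaining power-law map $g_\eps$ (of exponent $\nu''$ with $\nu'\nu''=\nu$), multiplies the output by a factor $\bigl(1+\eps/h_\eps(x)\bigr)^{\nu''}$ times lower-order corrections; since on the relevant domain $h_\eps(x)\ge h_\eps(\eps/2)\asymp \eps^{\nu'}$, this factor is $1+O(\eps^{1-\nu'})$, and the subsequent regular (exponent-one-at-worst) transitions do not worsen the order. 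Tracking constants, the multiplicative error on $f_\eps(x)/x^\nu$ is $O(\eps^{1-\nu})$, which is exactly $k(\eps)$ (note $1-\nu\ge 1-\nu'>0$ so this is a genuinely small quantity). I would phrase this by Taylor-expanding each unbroken transition map in its first argument around the unperturbed image, using the $C^1$ bounds to control the remainder uniformly in $\eps$, and composing the resulting two-sided power-law inequalities.

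The main obstacle is the uniformity of the $C^1$ estimates for the regular transition maps near the saddles all the way down to the scale $x\sim\eps$: the naive Dulac estimates degenerate as $x\to 0$, and one must use that on the truncated domain $x>\eps/2$ all intermediate coordinates stay bounded below by a positive power of $\eps$, so that every saddle passage is evaluated away from its singular fiber by a margin that, after taking logarithms, is $O(\ln(1/\eps))$ — enough to keep the multiplicative distortions at the claimed $O(\eps^{1-\nu})$ level. A secondary technical point is checking that the normalizing chart that achieves $f_0(x)=Cx^\nu$ (item 1) can be chosen simultaneously with, and compatibly with, the $\eps$-dependence, i.e. that conjugating $f_\eps$ by the $\eps=0$ linearizing chart does not destroy the two-sided bound in item 2); this follows because that chart is a fixed $C^1$ diffeomorphism of the transversal, hence bi-Lipschitz, and composing with a fixed bi-Lipschitz map changes $k(\eps)$ only by a bounded factor.
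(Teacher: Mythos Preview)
Your overall plan---factor $f_\eps$ as a regular part plus the single broken-connection shift, then estimate the multiplicative distortion---matches the paper's, but two steps do not go through as written.

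First, the bookkeeping around the partial product $\nu'$ is broken: you assert $\nu'\le\nu$ and later ``$1-\nu\ge 1-\nu'$'', which are contradictory, and neither is justified. For an arbitrary position of the broken edge the partial product $\nu'$ can exceed~$1$ (in the ``tears of the heart'' polycycle one vertex has characteristic number $\mu>1$); then $h_\eps(\eps/2)\asymp\eps^{\nu'}\ll\eps$, so $\eps/h_\eps(x)\to\infty$ on the domain and your $(1+\eps/h_\eps(x))^{\nu''}$ estimate is vacuous. The remedy, which the paper adopts, is to place the base transversal immediately \emph{after} the broken connection, so that the shift is the \emph{last} compositional factor: $f_\eps=T_\eps\circ Q_\eps$ with $Q_\eps$ carrying the full exponent~$\nu$ and $T_\eps(y)=y+f_\eps(0)$. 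With this choice your argument does give $\eps/Q_\eps(x)=O(\eps^{1-\nu})$.

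Second, even with that fix, two ingredients you treat as routine are where the real work lies. The Dulac maps $x\mapsto x^{\lambda_j(\eps)}$ are not $C^1$ at $x=0$, so ``Taylor-expanding each unbroken transition map in its first argument'' near the origin is unavailable; and their exponents depend on~$\eps$, so in any fixed chart $Q_\eps(x)/Q_0(x)$ carries the factor $x^{\Lambda(\eps)-\nu}$, which on $x\in(\eps/2,1)$ is only $1+O(\eps\ln\eps)$ and must be estimated separately before it can be absorbed into $O(\eps^{1-\nu})$. Your ``fixed bi-Lipschitz conjugation changes $k(\eps)$ by a bounded factor'' does not handle this, because the obstruction is an $x$-dependent factor $1+o_x(1)$, not a bounded multiplicative constant. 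The paper resolves both points by first normalising $Q_\eps$ \emph{parametrically} to the exact model $x\mapsto x^{\Lambda(\eps)}$---passing to the chart $y=-1/\ln x$, where $Q_\eps$ becomes a family of germs with linear part $y\mapsto y/\Lambda(\eps)$, and applying a parametric Sternberg linearisation---and only then, in this $\eps$-dependent $C^1$ chart, writing $f_\eps(x)=Cx^{\Lambda(\eps)}+\eps(1+o(1))$ and bounding $|x^{\Lambda(\eps)-\nu}-1|$ and $\eps/x^{\nu}$ directly. Your appeal to ``standard theory'' for item~1) hides exactly this normalisation step, which is the technical heart of the lemma.
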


\subsection{Definitions}

Here we give the necessary definitions and details. The definitions and notation are mostly inherited from~\cite{IKS}. Here and below $M$ is a smooth manifold and $B \subset \mathbb{R}^k$ is a topological open ball that will serve as the base of our families.

We denote by $\mathrm{Vect}(M)$ the set of $C^\infty$-smooth vector fields on~$M$, and by $\mathrm{Vect_\star}(M)$ the subset of vector fields with a finite number of singularities. We will deal only with vector fields in $\mathrm{Vect_\star}(M)$.

\medskip

\textbf{Definition.} A \emph{family of vector fields} on a manifold $M$ with the base $B$ is a vector field $V$ on $B \times M$ tangent to the fibers $\{ \alpha \} \times M$, $\alpha \in B$. The dimension of a family is the dimension of its base.

One can also regard a family as a smooth map from the base to~$\mathrm{Vect}(M)$.

Denote by $\mathcal{V}_k(M)$ the space of $k$-parameter families of vector fields on $M$ which are $C^\infty$-smooth as vector fields on $B \times M$. We believe that we only need $C^2$-smoothness in the parameter and $C^4$-smoothness in the phase variable, but we stick to the above definition to be able to use the finitely-smooth normal form for the Dulac maps of hyperbolic saddles from~\cite{IYa}.

\medskip

\textbf{Definition.} A family of vector fields is \emph{transverse} to a Banach submanifold $T$ of $\mathrm{Vect}(M)$ provided that the corresponding map $V : B \to \mathrm{Vect}(M)$ is transverse to~$T$.

We are interested in the case where $T$ is a Banach submanifold that consists of vector fields that have a ``tears of the heart'' polycycle $\gamma$ with an exterior saddle $E$ and interior saddle $I$, as in Fig.~\ref{fig:th}, and satisfy several additional assumptions: all singularities and cycles are hyperbolic, there are no saddle connections except those of~$\gamma$, the characteristic numbers~$\lambda, \mu$ of saddles $L$ and $M$ satisfy $\lambda < 1, \; \lambda^2\mu > 1$.

We will assume that in our families the vector field $v_0$ that corresponds to the parameter value $\alpha = 0$ is distinguished  by having a ``tears of the heart'' polycycle. We say that the family is an \emph{unfolding} of the vector field $v_0$ or that the polycycle \emph{is unfolded} in this family. 
We denote by $\mathrm{T}_k^\pitchfork$ the set of $k$-parameter unfoldings of $v \in T$ that are transverse to~$T$. 

\medskip

Recall the definitions of equivalence for vector fields and families that we will be using.

\medskip

\textbf{Definition.} Two vector fields $v$ and $\tilde{v}$ on a manifold $M$ are called \emph{orbitally topologically equivalent}, if there exists a homeomorphism $M \to M$ that sends orbits of $v$ to orbits of $\tilde{v}$ and preserves their time orientation.

\medskip

\textbf{Definition.} Two families of vector fields $\{v_\alpha \mid \alpha \in B \}$, $\{\tilde{v}_{\tilde{\alpha}} \mid \tilde{\alpha} \in \tilde{B}\}$ on $M$ are called \emph{weakly topologically equivalent} if there exists a map
\begin{equation}\label{eq:weak_eqiv}
H : B \times M \to \tilde{B} \times M, \quad H(\alpha, x) = \bigl(h(\alpha), H_\alpha(x)\bigr)
\end{equation}
such that $h : B \to \tilde{B}$ is a homeomorphism, and for each $\alpha \in B$ the map 
$H_\alpha : M \to M$ is a homeomorphism that links the phase portraits of $v_\alpha$ and $\tilde{v}_{h(\alpha)}$.

Two families are \emph{strongly topologically equivalent} if there exists a \emph{homeomorphism} $H$ with above properties. Strong topological classification of families is quite rigid, whereas weak topological equivalence is too lousy, see e.g. the discussion in~\cite{GI}. The results
of~\cite{IKS} that we use are stated using the notion of \emph{moderate topological equivalence}. Hence, whenever we write about equivalent families,
we mean the moderate topological equivalence. However, we believe that the moderate equivalence can be replaced by the weak equivalence in all the
results.

Denote by $\mathrm{SPS}(v)$ the union of all singular points, limit cycles, and separatrices of a vector field $v$, and for a family $V$ denote $\mathrm{SPS}(V) = \{(\alpha, x)\colon x \in \mathrm{SPS}(v_\alpha)\}$.  

\medskip

\textbf{Definition.} We say that families $V$ and $\tilde{V}$ are \emph{moderately topologically equivalent} if there exists a linking map $H$, as in~\eqref{eq:weak_eqiv}, such that $H$ is continuous in $(\alpha, x)$ at every point of  $\mathrm{SPS}(V)$ and $H^{-1}$ is continuous at every point of~$\mathrm{SPS}(\tilde{V})$.

\medskip
Although we do not emphasize this, our results can be reformulated for \emph{local} families, i.e., for the germs of families at $\alpha = 0$.

\section{Estimates of the monodromy mapping: proof of Lemma~\ref{lem:est}}

\subsection{Preliminary remarks}

The monodromy map of a hyperbolic polycycle can be decomposed as a product of Dulac maps of saddles alternating with regular maps. We will not need notation for the corresponding transversal sections, but we assume that the coordinates on these are chosen so that the origin is the point of intersection with the polycycle, and so every map in the composition takes 0 to 0. When the polycycle is unfolded in a family of vector fields in such a way that only one separatrix connection is broken, we may assume that all regular maps, except the last one that corresponds to the broken connection, still take 0 to~0. We will separate a parameter-dependent shift $T_\eps$ from the last regular map as a compositional factor, and thus the monodromy map $f_\eps=T_\eps\circ Q_\eps$, where $Q_\eps$ will still take $0$ to $0$ for all $\eps$.
We will prove that there is a $C^1$-smooth coordinate change that turns $Q_\eps$ simply into $x \mapsto x^{\Lambda(\eps)}$, where $\Lambda(\eps)$ is the product of characteristic numbers of saddles met along the polycycle, in Lemmas~\ref{lem:composition} and~\ref{lem:normalize}. Then it will be easy to see
that the whole parameter dependent monodromy map, when written in the corresponding $C^1$-coordinates, has the form
\[f_\eps(x) = Cx^{\Lambda(\varepsilon)} + \eps (1+\psi(x^{\Lambda(\eps)},\eps)),\]
where $\psi$ is continuous and satisfies $\psi(0,0)=0$.

For $x\in (\eps/2, x_0)$ this will yield an estimate of the form
\[
(C - k(\eps))x^{\Lambda(0)} < f_\eps(x) < (C + k(\eps))x^{\Lambda(0)}
\]
which will allow us to obtain an improved asymptotic formula in the double-logarithmic coordinate for the sequence $\eps_n$ of parameter values that correspond to new saddle connections with a certain external saddle.

\subsection{Normalization of the monodromy map}

One usually decomposes a monodromy map into a product of other monodromy maps, and we also do this below when considering a product of alternating Dulac maps and regular monodromy maps. But in the statement of Lemma~\ref{lem:composition} we use a different convention.
Assume that the monodromy map $f_\eps$ is from a semitransversal $\Gamma_+$ to itself; when we apply this to the ``tears of the heart'' polycycle, the role of $\Gamma_+$ will be played by $\Gamma_1$ or $\Gamma_2$. We view $f_\eps$ as a composition $T_\varepsilon \circ Q_\varepsilon$, where $T_\varepsilon, Q_\varepsilon$ are also maps from $\Gamma_+$ to itself.
Thus, when we make a coordinate change later that brings $Q_\varepsilon$ to its normal form $x \mapsto x^{\Lambda(\varepsilon)}$, the same coordinate change is to be applied to $T_\varepsilon$, which is quite convenient.

\begin{lemma} \label{lem:composition} Let $f_\eps$ be a family of monodromy maps along a perturbed monodromic hyperbolic polycycle in the special one-parameter local family of vector fields where only one saddle connection is broken. Assume that the family is $C^1$-smooth in $\eps$ and all vector fields are $C^\infty$-smooth in~$x$ and that at $\eps = 0$ the characteristic numbers of the saddles of the polycycle are irrational. 
Then for any smooth coordinate on the transversal, the map $f_\eps$ admits a decomposition 
$$
f_\eps(x)=T(Q(x, \eps),\eps) = T_\varepsilon \circ Q_\varepsilon(x)
$$    
where $T$ is a $C^1$-smooth $\varepsilon$-dependent shift and $Q_\varepsilon(\cdot) = Q(\cdot, \varepsilon)$ always takes 0 to 0 and has the form
$$
Q(x,\eps)=C(\eps)x^{\Lambda(\eps)}+O(x^{\Lambda(\eps)+c}),
$$
where $c > 0$ and $O$-big is uniform in~$\eps$. Here, $\Lambda(\eps)$ is the product of characteristic numbers of the saddles along the polycycle.
\end{lemma}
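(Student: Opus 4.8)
\emph{Proof plan.} The plan is to decompose $f_\eps$ into the usual alternating product of Dulac (corner) maps of the saddles and regular transition maps along the edges of the polycycle, to pull the parameter‑dependent shift out of the single broken edge, to bring each corner map to an (almost) pure power form $x\mapsto a_i(\eps)x^{\nu_i(\eps)}$ by the finitely‑smooth normal form for Dulac maps of hyperbolic saddles in families, \cite{IYa}, and finally to read off the asymptotics of the resulting composition by a short induction on its length.

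First I would fix the cyclic order of the decomposition so that the semitransversal $\Gamma_+$ is placed immediately before the broken connection. Then
$f_\eps = R^{(k)}_\eps\circ D^{(k)}_\eps\circ R^{(k-1)}_\eps\circ\cdots\circ D^{(1)}_\eps\circ R^{(0)}_\eps$,
where $D^{(i)}_\eps$ are the corner maps of the $k$ saddles of the polycycle, $R^{(i)}_\eps$ for $i<k$ are the regular transition maps along the \emph{unbroken} edges, and $R^{(k)}_\eps$ is the regular map along the broken edge. By the hypothesis that only this one connection is broken, every factor except $R^{(k)}_\eps$ sends $0$ to $0$ for all $\eps$, while $t(\eps):=R^{(k)}_\eps(0)$ is $C^1$ in $\eps$ and vanishes at $\eps=0$. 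Writing $R^{(k)}_\eps = T_\eps\circ\bar R^{(k)}_\eps$ with $T_\eps(y)=y+t(\eps)$ a $C^1$‑smooth $\eps$‑dependent shift and $\bar R^{(k)}_\eps(x)=R^{(k)}_\eps(x)-t(\eps)$ fixing $0$, we obtain $f_\eps=T_\eps\circ Q_\eps$ where $Q_\eps=\bar R^{(k)}_\eps\circ D^{(k)}_\eps\circ R^{(k-1)}_\eps\circ\cdots\circ D^{(1)}_\eps\circ R^{(0)}_\eps$ is a composition of maps fixing $0$, hence $Q_\eps(0)=0$.

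Next I would normalize the saddles. Since the characteristic numbers $\nu_i(0)$ are irrational, they avoid the finite set of low‑order resonant rationals, and by continuity so do $\nu_i(\eps)$ for $\eps$ near $0$; therefore the finitely‑smooth normal form of~\cite{IYa} applies on a fixed neighbourhood of each saddle and of $\eps=0$, giving $C^N$ (for a large fixed $N$), $C^1$‑in‑$\eps$ charts in which the corner map of the $i$‑th saddle is $D^{(i)}_\eps(x)=a_i(\eps)x^{\nu_i(\eps)}\bigl(1+O(x^{c_i})\bigr)$ with $a_i(\eps)>0$ continuous and $c_i>0$, the $O$ being uniform in $\eps$. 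Substituting these into $Q_\eps$ and absorbing the normalizing coordinate changes (together with the conversion between the given smooth chart on $\Gamma_+$ and the normal‑form chart) into the neighbouring regular maps, $Q_\eps$ becomes a composition of finitely many such power‑type maps alternating with $C^N$ diffeomorphisms $g(x)=bx(1+O(x))$ fixing $0$. I would then show by induction on the number of factors that any such composition has the form $C(\eps)x^{\Lambda(\eps)}\bigl(1+O(x^{c})\bigr)$ with $\Lambda(\eps)=\prod_i\nu_i(\eps)$: passing a diffeomorphism $g$ through multiplies the current error $1+O(x^{\delta})$ by $1+O(x^{\Pi})$, where $\Pi>0$ is the partial product of the $\nu_i$'s accumulated so far, while passing a power‑type factor $x\mapsto a_i x^{\nu_i}(1+O(x^{c_i}))$ through leaves the error exponent in $x$ essentially unchanged (adding $c_i$ to the pool) and multiplies the accumulated product by $\nu_i$. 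Because the $\nu_i(\eps)$ stay in a fixed compact subset of $(0,\infty)$, all partial products are bounded away from $0$ and $\infty$, so the final exponent gap $c$ can be chosen independent of $\eps$; and because the finitely many constants $a_i(\eps),b_j(\eps)$ are bounded and the finitely many $C^N$‑diffeomorphisms and their higher‑order parts are uniformly bounded on a compact parameter neighbourhood, the constant hidden in $O(x^{c})$ is uniform in $\eps$ as well. This gives $Q(x,\eps)=C(\eps)x^{\Lambda(\eps)}+O(x^{\Lambda(\eps)+c})$ uniformly, with $C(\eps)$ and $\Lambda(\eps)$ moreover $C^1$ in $\eps$.

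The main obstacle, and the point where the irrationality hypothesis is used, is the uniformity in $\eps$: one must make sure that the finitely‑smooth normal form of~\cite{IYa} is available on a \emph{full} neighbourhood of $\eps=0$ (so that the corner maps have the stated form uniformly), and that, propagating through the finite composition, both the exponent gap $c$ and all the $O$‑constants can be taken independent of $\eps$ — which reduces to the characteristic numbers varying continuously and remaining in a compact part of $(0,\infty)$ for small $\eps$, together with the uniform $C^N$‑boundedness of finitely many regular maps and coordinate changes on a compact parameter neighbourhood. The rest is routine bookkeeping with Taylor expansions.
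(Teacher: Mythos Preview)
Your proposal is correct and follows essentially the same route as the paper: decompose $f_\eps$ into alternating Dulac and regular maps, pull the $\eps$-dependent shift out of the single broken edge (equivalently, set $T_\eps(x)=x+f_\eps(0)$ and $Q_\eps=f_\eps-f_\eps(0)$), invoke the finitely-smooth normal form of~\cite{IYa} for the corner maps, and run an induction on the length of the composition to obtain $Q_\eps(x)=C(\eps)x^{\Lambda(\eps)}(1+O(x^c))$ with a uniform $c>0$. The only cosmetic differences are that the paper uses the \emph{pure} power form $\Delta_j(x)=x^{\lambda_j(\eps)}$ from~\cite{IYa} (your weaker form $a_i x^{\nu_i}(1+O(x^{c_i}))$ works too but is unnecessary), and the paper tracks the error exponent explicitly as $s_k(\eps)=\min_{i\le k}\Lambda_i(\eps)$, whereas you describe this bookkeeping more loosely; neither affects the argument.
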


\begin{proof}
Define $Q_\varepsilon$ as $Q_\varepsilon(x) = f_\varepsilon(x) - f_\varepsilon(0)$, then $Q_\varepsilon(0) \equiv 0$ and we have $f_\varepsilon = T_\varepsilon \circ Q_\varepsilon$ with $T_\varepsilon(x) = x + f_\varepsilon(0)$.

Let $N$ be the number of saddles in our hyperbolic polycycle. The characteristic numbers of these saddles are irrational for $\eps=0$, so due to \cite{IYa} we have a finitely smooth normal form 
\[
\Delta_j(x,\eps):=x^{\lambda_j(\eps)}
\]
for the Dulac maps of the saddles. These normal forms are obtained in some parameter-dependent coordinates on the semitransversals to the stable and the unstable separatrix of each saddle, and we write the regular monodromy maps between the saddles using the same coordinates. The regular maps along non-broken separatrices have the form:
\[
\varphi_j(y,\eps)=a_j(\eps)\,y + b_j(\eps)\,y^2 + r_j(y,\eps),
\]
with continuous $a_j>0, b_j, r_j$ and $r_j$ being $o(y^2)$ uniformly in $\eps$ as $y \to 0$. 
Since we have separated the shift $T_\varepsilon$ from $Q_\varepsilon$, we can assume that the same is true for the last regular map in the decomposition of $Q_\varepsilon$.

For notational convenience, we usually omit $\eps$ in the notation for $\Delta_j, \varphi_j$ and write, e.g., $f = T \circ Q$ instead of $f(x,\eps) = T(Q(x,\eps),\eps)$.
With this convention, the map $Q$ can be written as
$$
Q=\varphi_N\circ \Delta_N \circ \dots \circ \varphi_1\circ\Delta_1, 
$$
assuming that we write $Q$ in the normalizing coordinate (on $\Gamma_+$) of the first Dulac map.

The lemma is proved by induction in the number of Dulac maps.

Define recursively $Q_0(x,\eps):=x$ and, for $k=1,\dots,N$,
\[
Q_k:=\varphi_k\circ \Delta_k \circ Q_{k-1}.
\]

For $k = 1,\dots, N,$ let us denote
\[
\Lambda_k(\eps):=\prod_{i=1}^k \lambda_i(\eps), \qquad
s_k(\eps)=\min_{1\le i\le k}\Lambda_i(\eps).
\]

We prove by induction on $k$ that for some smooth $C_k(\eps)$ bounded away from zero we have
\begin{equation}\label{eq:inductive-form}
Q_k(x)=C_k(\eps)\,x^{\Lambda_k(\eps)} + R_k(x,\eps),
\qquad |R_k(x,\eps)|\le K_k\,x^{\Lambda_k(\eps)+s_k(\eps)},
\end{equation}
for all sufficiently small $\eps$ and all $x\in[0,x_k]$, where $x_k>0$ and $K_k>0$ are independent of~$\eps$.

\emph{Base of induction: $k=1$.}
Using the uniform Taylor expansion of $\varphi_1$, we write
\[
Q_1(x)=\varphi_1\bigl(x^{\lambda_1(\eps)}\bigr)
= a_1(\eps)\,x^{\Lambda_1(\eps)} + b_1(\eps)\,x^{2\Lambda_1(\eps)}
+ r_1\bigl(x^{\lambda_1(\eps)},\;\eps\bigr).
\]

The remainder is uniformly small, so there exists $x_1>0$ such that
$|r_1(x^{\lambda_1(\eps)},\eps)|\le x^{2\Lambda_1(\eps)}$ for $x\in[0,x_1]$ and all small~$\eps$. Hence
\eqref{eq:inductive-form} holds with $C_1=a_1$, $s_1=\Lambda_1(\eps)$, $K_1 = \sup |b(\eps)|+1$.

\emph{Inductive step.}
Assume \eqref{eq:inductive-form} holds with $k$ replaced by~$k-1$. Write
\[
Q_{k-1}(x,\eps)=A(x,\eps)+E(x,\eps),
\]
with 
\[A:=C_{k-1}(\eps)\,x^{\Lambda_{k-1}(\eps)},\quad
|E|\le K_{k-1}\,x^{\Lambda_{k-1}(\eps)+s_{k-1}(\eps)}.
\]
Set $\eta:=E/A$. By assumption, $\inf C_{k-1}(\eps) >0$, so we have $|\eta(x,\eps)|\le \tilde K\,x^{s_{k-1}(\eps)}$ for $x\in[0,\tilde x]$. Then, using the expansion $(1+\eta)^{\lambda}=1+\lambda\eta+O(\eta^2)$, we have
\[
\Delta_k(Q_{k-1})=\bigl(A\cdot(1+\eta)\bigr)^{\lambda_k(\eps)}
= A^{\lambda_k(\eps)}\Bigl(1+\lambda_k(\eps)\eta+O(\eta^2)\Bigr).
\]
Thus, for some $K'>0$ and $x\in[0,\tilde x]$,
\[
\Delta_k(Q_{k-1}(x))
= C_{k-1}(\eps)^{\lambda_k(\eps)}\,x^{\Lambda_k(\eps)}
\;+\; \widetilde E(x,\eps),\qquad
|\widetilde E(x,\eps)|\le K' x^{\Lambda_k(\eps)+s_{k-1}(\eps)}.
\]
Applying the uniform second-order expansion of $\varphi_k$, we get
\[
Q_k=\varphi_k (\Delta_k\circ Q_{k-1})
= a_k(\eps)\,(\Delta_k\circ Q_{k-1})
+ b_k(\eps)\,(\Delta_k\circ Q_{k-1})^2 + r_k\bigl(\Delta_k\circ Q_{k-1},\eps\bigr).
\]
Substituting the previous expansion of $Q_{k-1}$, using the uniform control of $r_k$, and shrinking $\tilde x$ if necessary, we obtain
\[
Q_k(x)
= a_k(\eps)\,C_{k-1}(\eps)^{\lambda_k(\eps)}\,x^{\Lambda_k(\eps)}
+ E_1(x,\eps)+E_2(x,\eps),
\]
where
\[
|E_1|\le K_1\,x^{\Lambda_k(\eps)+s_{k-1}(\eps)},\quad
|E_2|\le K_2\,x^{2\Lambda_k(\eps)}.
\]
We set $C_k(\eps)=a_k(\eps)\,C_{k-1}(\eps)^{\lambda_k(\eps)}$ and observe that it is smooth and bounded away from zero. Since $s_k(\eps):=\min\bigl(s_{k-1}(\eps),\,\Lambda_k(\eps)\bigr)$, we have
\[
Q_k(x)
= C_k(\eps)\,x^{\Lambda_k(\eps)} + R_k(x,\eps)\quad
\text{ with }\quad
|R_k(x,\eps)|\le K_k\,x^{\Lambda_k(\eps)+s_k(\eps)}.
\]
Taking $k=N$ and
$c<\min(1,s_N(0)/2)$, we get the required statement. Recall that we were working with $Q$ written in the normalizing chart of~$\Delta_1$. We omit the straightforward check that, since $c<\Lambda(\eps)$ and $c<1$, the map $Q$ will have the same form $Q(x, \eps)=C(\eps)x^{\Lambda(\eps)}+O(x^{\Lambda(\eps)+c})$ in any smooth $\eps$-dependent coordinate, but with a different~$C(\eps)$.

\end{proof}


\begin{lemma}\label{lem:normalize}
Let $Q_\eps$ be a local family of uniformly bounded $C^4$-maps defined for small $x>0$ and satisfying
\[
Q_\eps(x)=C(\eps)\,x^{\lambda(\eps)} + O\!\big(x^{\lambda(\eps)+c}\big),\qquad x\searrow0,
\]
uniformly in $\eps$, where $\lambda(\cdot),C(\cdot)$ are $C^2$-smooth positive functions and $\lambda(0) < 1$. Then $\{Q_\eps\}$ is $C^1$-conjugate near $x = 0$ to a model family
$$x\mapsto x^{\lambda(\eps)}.$$
More precisely: there exists a family of parameter-dependent diffeomorphisms $\Phi_\eps$, $C^1$-smooth in $(x, \eps)$, with $\Phi_\eps(0)=0$ and $\Phi_\eps'(0)>0$, such that
\[
(\Phi_\eps^{-1}\circ Q_\eps\circ \Phi_\eps) (x)=x^{\lambda(\eps)}
\]
for all $x \in [0,x_0]$ with $x_0$ sufficiently small.
\end{lemma}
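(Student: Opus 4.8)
The plan is to peel off the leading coefficient by an explicit rescaling, pass to the logarithmic chart (in which the model map becomes linear), and build the linearizing conjugacy as the limit of the ``straightening'' iterates $G_\eps^{\,n}\circ F_\eps^{-n}$, using crucially that in the logarithmic chart the perturbation of the linear map, together with its relevant derivatives, is \emph{exponentially small}. \textbf{Step 1 (removing $C$).} Since $\lambda(0)<1$ and $\lambda,C$ are $C^2$ and positive, $\kappa(\eps):=C(\eps)^{1/(1-\lambda(\eps))}$ is $C^2$ and positive on a small parameter ball, and the linear change $x=\kappa(\eps)\,y$ conjugates $Q_\eps$ to $R_\eps(y):=\kappa(\eps)^{-1}Q_\eps\bigl(\kappa(\eps)y\bigr)=y^{\lambda(\eps)}\bigl(1+g(y,\eps)\bigr)$ with $g(y,\eps)=O(y^{c})$ uniformly in $\eps$, because $C(\eps)\,\kappa(\eps)^{\lambda(\eps)-1}=1$. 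I will compose the conjugacy built below with this rescaling; as that conjugacy turns out to have derivative $1$ at $0$, the resulting $\Phi_\eps$ will satisfy $\Phi_\eps(0)=0$ and $\Phi_\eps'(0)=\kappa(\eps)>0$, so from now on I may assume $C\equiv 1$.

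\textbf{Step 2 (logarithmic chart and derivative estimates).} Put $t=-\ln y\in[T_0,\infty)$. The model $y\mapsto y^{\lambda(\eps)}$ becomes the linear map $G_\eps(t)=\lambda(\eps)\,t$, and $R_\eps$ becomes $F_\eps(t)=\lambda(\eps)\,t+\phi(t,\eps)$ with $\phi(t,\eps)=-\ln\bigl(1+g(e^{-t},\eps)\bigr)=O(e^{-ct})$ uniformly in $\eps$. A Landau--Kolmogorov type interpolation, applied to the uniform $C^4$-in-$x$ (and $C^2$-in-$\eps$) bounds on $Q_\eps$ together with the asymptotic expansion of $Q_\eps$ and of its $\eps$-derivative --- for which one uses that the family is $C^1$ in $\eps$, as it is in all our applications by Lemma~\ref{lem:composition}, and where the leading terms conveniently cancel thanks to the choice of $\kappa$ --- upgrades this to $\partial_t\phi,\ \partial_\eps\phi,\ \partial_t\partial_\eps\phi=O(t^{a}e^{-ct})$ for some $a\ge 0$, uniformly in $\eps$ (after decreasing $c$ if necessary; the polynomial factors will be harmless). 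Shrinking the ball and taking $T_0$ large, uniformly in $\eps$, we then have: $F_\eps$ is an increasing $C^1$-diffeomorphism with $F_\eps'\ge\tfrac12\inf_\eps\lambda(\eps)>0$; $F_\eps([T_0,\infty))\supset[T_0,\infty)$ and $F_\eps^{-1}$ maps $[T_0,\infty)$ strictly into itself; and $F_\eps^{-1}$ expands at a uniform geometric rate $\rho>1$ (because $\sup_\eps\lambda(\eps)<1$), so that $t_k:=F_\eps^{-k}(t)$ satisfies $t_k\ge\rho^{k}t$ and $t_k\nearrow\infty$.

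\textbf{Step 3 (the conjugacy and transport back).} Set $H_\eps:=\lim_{n\to\infty}G_\eps^{\,n}\circ F_\eps^{-n}$ on $[T_0,\infty)$. Unwinding $\lambda(\eps)t_k+\phi(t_k,\eps)=t_{k-1}$ gives the telescoping identity $G_\eps^{\,n}\circ F_\eps^{-n}(t)=\lambda(\eps)^{n}t_n=t-\sum_{k=1}^{n}\lambda(\eps)^{k-1}\phi(t_k,\eps)$, and since $|\phi(t_k,\eps)|\le K\,t_k^{a} e^{-c t_k}$, which is doubly-exponentially small in $k$ thanks to $t_k\ge\rho^k t$, the series converges uniformly in $(t,\eps)$; hence $H_\eps(t)=t-\sum_{k\ge1}\lambda(\eps)^{k-1}\phi(t_k,\eps)=t+O(e^{-ct})$. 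Passing to the limit in $G_\eps^{\,n}\circ F_\eps^{-n}\circ F_\eps=\lambda(\eps)\cdot\bigl(G_\eps^{\,n-1}\circ F_\eps^{-(n-1)}\bigr)$ yields $H_\eps\circ F_\eps=G_\eps\circ H_\eps$. Term-by-term differentiation is legitimate because $(F_\eps^{-k})'(t)=\prod_{j=1}^{k}F_\eps'(t_j)^{-1}=\lambda(\eps)^{-k}\cdot O(1)$ and $\partial_\eps t_k$ grows at most geometrically in $k$, while the corresponding factors $\partial_t\phi(t_k,\eps),\partial_\eps\phi(t_k,\eps)$ carry a weight that decays doubly-exponentially in $k$; one obtains $H_\eps'(t)=1+O(e^{-ct})$, uniformly, so $H_\eps$ is an increasing $C^1$-diffeomorphism for $T_0$ large, and the analogous computation in $\eps$ shows $H_\eps$ is jointly $C^1$ in $(t,\eps)$. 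Finally $\psi_\eps(y):=\exp\bigl(-H_\eps(-\ln y)\bigr)$ is a $C^1$-diffeomorphism of $[0,e^{-T_0}]$ onto its image with $\psi_\eps(0)=0$, $\psi_\eps'(0)=1$ (since $H_\eps(t)=t+O(e^{-ct})$ forces $\psi_\eps(y)=y(1+O(y^{c}))$), and $H_\eps\circ F_\eps=G_\eps\circ H_\eps$ becomes $\psi_\eps\circ R_\eps=M_\eps\circ\psi_\eps$ with $M_\eps(y)=y^{\lambda(\eps)}$; hence $\Phi_\eps:=\bigl(y\mapsto\kappa(\eps)y\bigr)\circ\psi_\eps^{-1}$ is a family of diffeomorphisms, $C^1$ in $(x,\eps)$, with $\Phi_\eps(0)=0$, $\Phi_\eps'(0)=\kappa(\eps)>0$, and $\Phi_\eps^{-1}\circ Q_\eps\circ\Phi_\eps=M_\eps$ on $[0,x_0]$ for $x_0$ small uniformly in $\eps$.

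\textbf{The main obstacle.} The dynamical part is soft: the perturbation of the linear map in the logarithmic chart is exponentially small, so all series converge super-geometrically and no small-divisor or genuine-linearization difficulty arises. The real work is Step~2 --- extracting, from the qualitative hypothesis $Q_\eps(x)=C(\eps)x^{\lambda(\eps)}+O(x^{\lambda(\eps)+c})$ and the uniform $C^4$-in-$x$ (and $C^2$-in-$\eps$) bounds alone, the required \emph{uniform} estimates on $\phi$ and on $\partial_t\phi,\partial_\eps\phi,\partial_t\partial_\eps\phi$ valid right down to the boundary point $x=0$, and in particular making precise and controlling the $\eps$-dependence (including the cancellation that kills the would-be non-small term in $\partial_\eps\phi$). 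Without those estimates one cannot assert that the linearizing conjugacy is $C^1$ and non-degenerate at $0$.
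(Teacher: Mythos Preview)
Your proposal is correct at the level of detail given, and it takes a genuinely different route from the paper. Both arguments begin with the same rescaling $x=\kappa(\eps)y$ to kill~$C(\eps)$ and then pass to a logarithmic chart, but the paper uses $y=-1/\ln x$, in which the map acquires a \emph{genuine} fixed point at $0$ with expansion $G_\eps(y)=y/\lambda(\eps)+o(y^2)$, and then invokes Sell's parametric Sternberg linearization theorem as a black box. You instead use $t=-\ln x$, so the ``fixed point'' is at $+\infty$ and the perturbation $\phi$ of the linear part is \emph{exponentially small}; this lets you build the conjugacy explicitly as the telescoping limit $\lim_n G_\eps^{\,n}\!\circ F_\eps^{-n}$, with all series converging doubly-exponentially. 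Transferring $C^1$-regularity back to $x=0$ is also handled differently: the paper has a separate computation (its Proposition on the chart $-1/\ln x$) showing $H'(0)=e^{g''(0)/2}$, while in your picture it falls out immediately from $H_\eps(t)=t+O(e^{-ct})$, giving $\psi_\eps(y)=y(1+O(y^c))$ and hence $\psi_\eps'(0)=1$.

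What each approach buys: the paper's proof is shorter because the linearization step is outsourced, but it needs the rather strong hypotheses ($C^4$ in $x$, $C^2$ in $\eps$) that Sell's theorem demands. Your argument is self-contained --- no external linearization theorem --- and makes explicit that no small-divisor phenomenon is present; the price is Step~2, where you must actually extract the uniform decay bounds on $\partial_t\phi,\partial_\eps\phi,\partial_t\partial_\eps\phi$ from the hypotheses via interpolation. You rightly flag this as the main obstacle, and your remark that the $\partial_\eps$-estimate needs the family to be $C^1$ in $\eps$ (as supplied in applications by the preceding Lemma on the structure of $Q_\eps$) is honest: the paper's proof has exactly the same implicit reliance on joint smoothness when it asserts that $G_\eps$ satisfies the hypotheses of Sell's theorem.
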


\begin{proof}
\begin{enumerate}
 \item A local family is a germ of a family at $\eps = 0$. Since $C(\eps)$ is bounded and separated from 0 for small $\eps$ and $\lambda(\eps)$ is separated from 1, we can make an $\eps$-dependent rescaling of $x$
and remove the coefficient~$C(\eps)$. Thus, it suffices to treat the case where
$Q_\eps(x)=x^{\lambda(\eps)}+O(x^{\lambda(\eps)+c})$.

 \item Consider the chart
\[
y=-\frac{1}{\ln x}\qquad(x=e^{-1/y},\; y\searrow 0 \text{ as } x\searrow 0).
\]
In this chart the map  $x\mapsto Q_\eps(x)$ is transformed into a map
\[
y\mapsto G_\eps(y)\;=\; -\frac{1}{\ln\big(Q_\eps(e^{-1/y})\big)} .
\]
We have $Q_\eps(e^{-1/y})=e^{-\lambda(\eps)/y}+O\big(e^{-(\lambda(\eps)+c)/y}\big)$ and hence

\[
G_\eps(y)=\frac{1}{\lambda(\eps)}y + R(y,\eps),
\qquad R(y,\eps)=o(y^2).\
\]
In particular, $G'_\eps(0)=\frac{1}{\lambda(\eps)}.$

\item The map $(y,\eps) \mapsto G_\eps(\cdot)$ can be viewed as a smooth family (with parameter~$\eps$) of germs at $y=0$ whose linear parts are $y\mapsto y/\lambda(\eps)$.
 Using $C^4$-regularity in $y$ and $C^2$-regularity in $\eps$, we apply the parametric version of the Sternberg linearization theorem proven by G.~R.~Sell \cite[Theorem~9]{Sell:85} and obtain a family $\Psi$ of local diffeomorphisms
$
\Psi_\eps(y)
$
which is $C^2$ in $(y,\eps)$, with $\Psi_\eps(0)=0$ and $\Psi'_\eps(0)\ne0$, and conjugates $G_\eps(\cdot)$ to its linear part:
\[
(\Psi_\eps^{-1}\circ G_\eps\circ \Psi_\eps)(y)=\frac{y}{\lambda(\eps)} .
\]

Note that one can replace $\Psi_\eps(y)$ with $\Psi_\eps(y)/ \Psi'_\eps(0)$ to guaranty  $\Psi_\eps'(0)=1$ for all~$\eps$.

\item The conjugacy $\Psi_\eps$ in the $y$-chart transforms into a conjugacy $\Phi_\eps$ in the original $x$-variable by the relation $x=e^{-1/y}$. The only point yet to check is the regularity of $\Phi$ at $x=0$; this is verified below in Proposition~\ref{prop:C1-chart}.
\end{enumerate}
\end{proof}

\begin{proposition}\label{prop:C1-chart}
Let $g \colon (\mathbb R^{+},0) \to (\mathbb R^{+},0)$ be a germ of a $C^2$-smooth function such that 
$$g(y)=y+R(y) \qquad R(y)=O(y^2),$$
and let $h(x)=-\frac{1}{\ln x}$. Then $H=h^{-1}\circ g\circ h$ is $C^1$-smooth.

Moreover, if $g$ depends on a parameter $\eps$ and $g''(x)$ is $C^1$-smooth in~$\eps$, then $H'$ also is $C^1$-smooth w.r.t.~$\eps$.
\end{proposition}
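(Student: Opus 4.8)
The plan is to work directly with the map $H = h^{-1}\circ g\circ h$ and establish $C^1$-smoothness by computing $H'$ explicitly, showing it extends continuously to $x = 0$ with value $H'(0) = 1$, and then — for the parametric statement — tracking how the $\eps$-derivative passes through the same formula. Recall $h(x) = -1/\ln x$, so $x = e^{-1/y}$ with $y = h(x)$, and $h'(x) = \frac{1}{x(\ln x)^2} = \frac{h(x)^2}{x}$. Writing $H(x) = e^{-1/g(h(x))}$, the chain rule gives
\[
H'(x) = e^{-1/g(h(x))}\cdot \frac{g'(h(x))}{g(h(x))^2}\cdot h'(x)
= \frac{e^{-1/g(y)}}{e^{-1/y}}\cdot \frac{g'(y)\, y^2}{g(y)^2},
\]
where $y = h(x)$. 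So the whole question reduces to the behavior as $y \searrow 0$ of the two factors $E(y) := \exp\!\bigl(\tfrac1y - \tfrac1{g(y)}\bigr)$ and $F(y) := g'(y)y^2/g(y)^2$.

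The second factor is easy: since $g(y) = y + R(y)$ with $R(y) = O(y^2)$ and $g$ is $C^2$, we have $g'(y) = 1 + R'(y)$ with $R'(y) = O(y)$ (this uses $C^2$; one gets $R'(y) = o(y)\cdot$? — actually only $R'(y)\to 0$, which suffices), hence $g(y)^2 = y^2(1 + O(y))$ and $F(y) = (1 + O(y))(1 + O(y))^{-1}\to 1$. For the first factor, $\tfrac1y - \tfrac1{g(y)} = \tfrac{g(y) - y}{y\,g(y)} = \tfrac{R(y)}{y\,g(y)} = \tfrac{O(y^2)}{y^2(1+O(y))} = O(1)$, so $E(y)$ stays bounded; but to get the limit I need more. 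Since $R(y) = O(y^2)$ we get $\tfrac1y - \tfrac1{g(y)} = O(y) \to 0$, hence $E(y)\to e^0 = 1$. Wait — $R(y)/(y g(y)) = O(y^2)/(y^2) = O(1)$ in general, but $R(y) = O(y^2)$ means $R(y)/y^2$ is bounded, so $R(y)/(yg(y)) = (R(y)/y^2)\cdot(y/g(y))$, which is bounded but need not tend to $0$. So I should be careful: $E(y)$ converges to $1$ only if $R(y) = o(y^2)$; in general $E$ has a finite nonzero limit $e^{c}$ where $c = \lim R(y)/y^2$ — but then $H'(0) = e^c$, and we would also need to check $H'$ is continuous, not that $H'(0)=1$. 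Actually the hypothesis only claims $C^1$-smoothness, not that $H'(0) = 1$, so a finite limit of $E(y)F(y)$ suffices; but $R(y) = O(y^2)$ alone does not force $R(y)/y^2$ to converge. The resolution: $g$ is $C^2$, so by Taylor's theorem $R(y) = \tfrac12 g''(0)y^2 + o(y^2)$, hence $R(y)/y^2 \to \tfrac12 g''(0) =: c$ exists. This is the step where $C^2$-regularity is essential — $C^1$ would not guarantee the limit. Therefore $H'(x)\to e^{c}\cdot 1$ as $x\searrow 0$, continuity of $H'$ on $(0,\eps)$ is clear from the formula, and $H$ is $C^1$ up to $0$ with $H'(0) = e^{g''(0)/2}$.

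For the parametric refinement: under the hypothesis that $g''$ is $C^1$ in $\eps$ (which upgrades, via Taylor with parameters, to $R(y,\eps)/y^2 \to \tfrac12 g''(0,\eps)$ with the convergence locally uniform and $C^1$-dependence on $\eps$), I would differentiate the displayed formula for $H'$ with respect to $\eps$. Every ingredient — $g(y,\eps)$, $g'(y,\eps)$, and hence $E$ and $F$ — is then $C^1$ in $\eps$ for $y > 0$, and the same Taylor-expansion argument shows $\partial_\eps\bigl(E(y,\eps)F(y,\eps)\bigr)$ extends continuously to $y = 0$ with limit $\tfrac12 \partial_\eps g''(0,\eps)\cdot e^{g''(0,\eps)/2}$. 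Thus $\partial_\eps H'(x,\eps)$ is continuous up to $x = 0$, i.e. $H'$ is $C^1$ in $\eps$, which is what is claimed.

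The main obstacle is the first factor $E(y) = \exp\bigl(\tfrac1y - \tfrac1{g(y)}\bigr)$: naively $\tfrac1y$ and $\tfrac1{g(y)}$ each blow up, and one must see that their difference has a finite limit — this is exactly where $C^2$-smoothness of $g$ (giving the second-order Taylor coefficient) is used, and where the bookkeeping must be done carefully rather than by a crude $O$-estimate. The rest is routine chain-rule computation and uniform Taylor expansions with parameters.
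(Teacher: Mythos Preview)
Your proposal is correct and follows essentially the same route as the paper: both compute $H'$ by the chain rule, reduce the question to the limit of $\exp\!\bigl(\tfrac{1}{y}-\tfrac{1}{g(y)}\bigr)\cdot\tfrac{g'(y)y^2}{g(y)^2}$ as $y\searrow 0$, and use the $C^2$-Taylor expansion $R(y)=\tfrac12 g''(0)y^2+o(y^2)$ to evaluate $\tfrac{1}{y}-\tfrac{1}{g(y)}=\tfrac{R(y)}{y\,g(y)}\to g''(0)/2$, obtaining $H'(0)=e^{g''(0)/2}$. Your discussion of the parametric case is, if anything, more explicit than the paper's one-line treatment.
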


\begin{proof}
Obviously, $h^{-1}(y)=e^{-1/y}$. 
By the chain rule,
$$
H'(x)= \exp(-1/(g(h(x))) \cdot \frac{1}{g(h(x))^{2}} \cdot (1+R'(h(x)))\cdot\frac{1}{x \ln^2 x}.
$$

Thus, for $x>0$ the map $H$ is $C^1$, and $H'$ also $C^1$-smoothly depends on $\eps$ provided that $R$ smoothly depends on~$\eps$.

When $x \searrow 0$, one has $R'(h(x))\to 0$ and  $g(h(x))^2\ln^2x\to 1$. Hence
$$
\lim_{x\to+0} H'(x)=\lim_{x\to+0}  \exp(-1/(g(h(x)))/x=\lim_{x\to+0}  \exp(-1/(g(h(x)))-\ln(x)).
$$
Put $y=h(x) = -1/\ln x$. Then
$$
\frac{-1}{g(h(x))}-\ln(x)=\frac{-1}{y+R(y)}-\frac{-1}{y}=\frac{R(y)}{y(y+R(y))}\to g''(0)/2, \quad y\to 0.
$$

Therefore
$$
\lim_{x\to0} H'(x)=e^{g''(0)/2}.
$$
When $g''$ smoothly depends on $\eps$, so does $H'$, and we are done.
\end{proof}

We apply Proposition~\ref{prop:C1-chart} to $g = \Psi$ to check that $\Psi$ is smooth at $x = 0$.

\subsection{The estimates of the monodromy map}\label{sec:ineq_for_P}
In Lemma~\ref{lem:composition} above, we separated a parameter-dependent shift $T_\varepsilon$ from the monodromy map $f_\eps$. The important property of this factor is that it becomes the identity at $\eps = 0$: $T_0(x) = x + f_0(0) = x$ for all $x$ near zero. Unlike the property of being a shift, this property is preserved under an arbitrary coordinate change. Recall that the monodromy map along the perturbed polycycle decomposes as
\[f(x, \eps) = T_\eps \circ Q_\eps(x) = Q_\eps(x) + f_\eps(0),\]
and by Lemma~\ref{lem:normalize}, in a certain $C^1$-coordinate the map $Q_\eps$ rewrites as $x \mapsto  x^{\Lambda(\eps)}$. In the same $\eps$-dependent coordinate,\footnote{Recall that we view $T_\eps$ and $Q_\eps$ as maps from the semitransversal $\Gamma_+$ to itself.} $T_\eps$ rewrites as $\varphi_\eps(\cdot) = \varphi(\cdot, \eps)$, which is in general only $C^1$-smooth, but retains the property $\varphi_0(u) \equiv u$.

\begin{proof}[Proof of Lemma~\ref{lem:est}]
We work in the normalizing coordinate for $Q_\eps$.
Denote
\[
   \Lambda:=\Lambda(0) < 1,\qquad K:=\partial_{\eps}\varphi(0,0).
\]
Write the first-order expansion of $\varphi$ in $\eps$, uniform in $u$ near $0$:
\begin{equation}\label{eq:fexp}
\varphi(u,\eps)=u+\eps\,\partial_\eps \varphi(u,0)+\eps\,\eta(u,\eps), \quad \eta(u,\eps) = o(1), \quad \eps \searrow 0,
\end{equation}
and substitute $u=Q(x,\eps) = x^{\Lambda(\eps)}$:
\[
\varphi(x^{\Lambda(\eps)},\eps)
=x^{\Lambda(\eps)}+\eps\,\partial_\eps \varphi(x^{\Lambda(\eps)},0)+\eps\,\eta(x^{\Lambda(\eps)},\eps).
\]

Since $\Lambda(0)>0$, we have $x^{\Lambda(\varepsilon)}\to 0$ as $x\searrow 0$, and hence
$\partial_\eps \varphi(x^{\Lambda(\eps)},0)\to\partial_\eps \varphi(0,0)=K$, and the convergence is uniform for $\eps$ near 0. Thus,
$$
\varphi(x^{\Lambda(\eps)},\eps)=x^{\Lambda(\eps)}+\eps(K+\tilde\eta(x^{\Lambda(\eps)},\eps)).
$$

By changing the scale, we can get rid of the constant $K$ in the $K\eps$ term, but at the cost of obtaining a constant in the $x^{\Lambda(\varepsilon)}$ term \footnote{We assume that the separatrix connection of the original polycycle is broken with non-zero speed with respect to the parameter~$\eps$, and therefore $K\neq0$.}:
\begin{equation}\label{eq:monodromy_expansion}
f(x, \eps) = Cx^{\Lambda(\varepsilon)} + \eps (1+\psi(x^{\Lambda(\eps)},\eps)),
\end{equation}
where $\psi$ is continuous and $\psi(0,0) = 0$.
In the rest of the paper, whenever the monodromy map of a perturbed polycycle is considered, we will work in a coordinate where it has this form, and we will refer to such a coordinate as a parameter-dependent \emph{canonical chart}. The chart itself is not uniquely defined by our conditions, but the constant~$C$ is, and we refer to it as one of \emph{the constants of the family}.

Consider
the domain $\{x\colon \eps < x < x_0 < 1\}$, where $x_0$ is small.
By~\eqref{eq:monodromy_expansion}, we have
$$
\left| \frac{f(x, \eps)}{Cx^\Lambda}-1 \right |= \left|x^{\Lambda(\eps)-\Lambda}-1+\frac{\eps}{x^\Lambda}(1+\psi(x^{\Lambda(\eps)},\eps))\right|.
$$

For the term $x^{\Lambda(\eps)-\Lambda}$ we can write
\[x^{\Lambda(\eps)-\Lambda} =
x^{O(\eps)} = \bigl( x^\eps \bigr)^{O(1)}.
\]
We have $1 + \eps\ln\eps < e^{\eps\ln\eps} = \eps^\eps < 1$. The number $x^\eps$ is between $\eps^\eps$ and $1$, and therefore is inside the segment $[1+\eps \ln\eps, \;1]$. Hence,
\[
x^{\Lambda(\eps)-\Lambda} - 1 = 
\bigl( x^\eps \bigr)^{O(1)} -1 = \left(1 + O\bigl(\eps \ln\eps\bigr)\right)^{O(1)} - 1 =  O\bigl(\eps \ln\eps\bigr), \quad \eps \to +0,
\]
which yields
$$
\left| \frac{f(x,\eps)}{Cx^\Lambda}-1 \right | = O(\eps\ln\eps)+O(\eps^{1-\Lambda}).
$$

For sufficiently small $\eps > 0$, we have $\eps|\ln\eps| < \eps^{1-\Lambda}$,
so there exists $k>0$ such that for small $\eps$ and all $x \in (\eps, x_0)$ one has
\begin{equation}\label{eq:g_and_h}
g_\eps(x) := (C-k\eps^{1-\Lambda} )x^\Lambda<f_\eps(x)<(C+k\eps^{1-\Lambda})x^\Lambda =: h_\eps(x).
\end{equation}

The proof of Lemma \ref{lem:est} is finished.
\end{proof}

\begin{remark}
    The same argument works in the domain $\{x \colon  \eps/2 < x < x_0\}$, only the constant $k$ will be different.
\end{remark}

\section{Preliminary description of the base homeomorphism}\label{sec:3}

\subsection{The connection equation: the general case and the model case}

Let $B(\eps)$ be the coordinate of the point where the separatrix of the external saddle first intersects the semitransversal $\Gamma_+$ where our monodromy map $f_\eps$ is defined. Since it is a stable separatrix, we may as well call it the point of the \emph{last} intersection. The condition for the appearance of a connection between this saddle and saddle $L$ can be written as $f^{n+1}_{\eps_n}(0)=B(\eps_n)$; we will refer to it as the connection equation.
It was proved in~\cite{IKS} that the solutions $\eps_n$ exist and form a monotone sequence that tends to zero.

First, let us consider the model example where
\[f_\eps(x) = Cx^{\Lambda(\varepsilon)} + \eps.\]
We will deal with the term $\eps\psi(x^{\Lambda(\eps)}, \eps)$ of the actual map $f_\eps$ later. In the model case, the connection equation can be rewritten as
$f^{n}_{\eps_n}(\eps_n)=B(\eps_n)$.

Note that the coordinate $B(\eps)$ depends smoothly on $\eps$, so we have $B(\eps) = B + O(\eps)$. With the sequence $(\eps_n)_n$ fixed and the functions $g_\eps, \; h_\eps$ defined in~\eqref{eq:g_and_h}, consider the equations
\[
g_{\eps_n}^n(\delta_n) = B(\eps_n), \qquad h_{\eps_n}^n(\mu_n) = B(\eps_n).
\]
For all small $\eps$ one can write a common lower estimate for the position of the attracting fixed point for the functions $g_\eps, h_\eps$. If $B(\eps_n)$ lies to the left of this point --- which is the case if $B$ is small\footnote{This can be achieved by replacing $B$ with its image under some iterate of $f_0$, see \cite{IKS}.} --- then both equations are solvable. Moreover, the inequality
\[
\mu_n < \eps_n < \delta_n
\]
holds.

Indeed, suppose that $\delta_n \le \eps_n$. Then, by the monotonicity of $g_{\eps_n}$ and the inequality $g_{\eps_n} < f_{\eps_n}$, we obtain
\[
B(\eps_n) = g_{\eps_n}^n(\delta_n) \leq g_{\eps_n}^n(\eps_n) < f_{\eps_n}^n(\eps_n) = B(\eps_n).
\]
The resulting contradiction shows that $\eps_n < \delta_n$.
The second inequality is checked analogously.

After applying the double logarithm and taking signs into account, we obtain
\begin{equation}\label{eq:e_n_ineq}
\ln(-\ln \delta_n)<\ln(-\ln \eps_n)<\ln(-\ln \mu_n).
\end{equation}

\subsection{Asymptotics of the sparkling saddle connections: the model case}\label{sect:taylor}
We will prove the generalization of Lemma \ref{lem:ass1} that deals with an arbitrary monodromic polycycle perturbed in a one-parameter family in such a way that only one separatrix connection is broken. First, we give the proof for the model case where the monodromy map has the form $f_\eps(x) = Cx^{\Lambda(\eps)} + \eps$. Then we discuss the general case\footnote{Recall that in order to have this form for the perturbed monodromy map we assume that the characteristic numbers of the saddles in the polycycle are irrational and that for their product we have $\Lambda(0) < 1$.} of $f_\eps(x) = Cx^{\Lambda(\varepsilon)} + \eps (1+\psi(x^{\Lambda(\eps)},\eps))$.

\begin{lemma}[Generalized Lemma~\ref{lem:ass1}]
\label{lem:taylor}
In the case of a monodromic polycycle (with irrational characteristic numbers that have product $\Lambda < 1$) perturbed in a one-parameter family where only one connection is broken, the sequence $(\eps_n)$ of parameter values that correspond to separatrix connections with an external saddle has the form
    $$
    \ln(-\ln\eps_n)=-n\ln\Lambda+\beta+\theta\Lambda^n+R_n,
    $$
    where
    $$
    \beta=\ln\left(\frac{1}{1-\Lambda}\ln C-\ln B\right),
    $$
    $$
    \theta=-\left(\frac{\ln C}{1-\Lambda}-\ln B\right)^{-1}\frac{\ln C}{1-\Lambda},
    $$
and $R_n=o(\Lambda^{n})$; $B$ and $C$ were described above.
\end{lemma}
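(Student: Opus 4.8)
The plan is to analyze the iteration $f_\eps^n(\eps)$ directly in the double-logarithmic chart, where $f_\eps$ becomes (to leading order) an affine contraction, and then solve the connection equation $f_{\eps_n}^n(\eps_n) = B(\eps_n)$ asymptotically. First I would pass to the chart $y = -\ln x$ (or rather, since we eventually want the double logarithm, I keep track of $x_k := f_\eps^k(\eps)$ and its logarithm). Starting from $x_0 = \eps$, one has $-\ln x_{k+1} = -\ln(Cx_k^\Lambda + \eps) = \Lambda(-\ln x_k) - \ln C - \ln(1 + \eps C^{-1} x_k^{-\Lambda})$. Since $x_k$ decreases geometrically once $k$ grows (the map $x\mapsto Cx^\Lambda$ is a strong contraction near $0$ because $\Lambda<1$), the correction term $\eps C^{-1}x_k^{-\Lambda}$ is small except for the very first few iterates; I would show that $L_k := -\ln x_k$ satisfies $L_{k+1} = \Lambda L_k - \ln C + \rho_k$ with $\sum |\rho_k|$ summable and, more precisely, $\rho_k$ itself decaying geometrically. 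Solving this linear recursion gives $L_n = \Lambda^n L_0 + \frac{\ln C}{\Lambda-1}(1-\Lambda^n)\cdot(-1) + (\text{geometrically small})$; wait — more carefully, $L_n \to \frac{-\ln C}{1-\Lambda} =: L_\infty$ as $n\to\infty$ (the fixed point of $L\mapsto \Lambda L - \ln C$), and $L_n = L_\infty + \Lambda^n(L_0 - L_\infty) + o(\Lambda^n)$, where one must be slightly careful because $L_0 = -\ln\eps_n$ itself depends on $n$ and tends to $+\infty$.

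The key subtlety — and I expect this to be the main obstacle — is precisely that $L_0 = -\ln \eps_n \to \infty$, so the term $\Lambda^n(L_0 - L_\infty)$ is a competition between $\Lambda^n \to 0$ and $L_0 \to \infty$. I would handle this by a bootstrap: the crude asymptotics~\eqref{eqn:ass0}, $\ln(-\ln\eps_n) = -n\ln\Lambda + O(1)$, gives $-\ln\eps_n = \Lambda^{-n}e^{O(1)}$, hence $\Lambda^n L_0 = e^{O(1)}$ is bounded and bounded away from zero. Plugging this back, $L_n = L_\infty + \Lambda^n \cdot (-\ln\eps_n) + (\text{lower order})$ — but here the "geometrically small" error terms $\rho_k$ accumulated along the orbit must be shown to be $o(\Lambda^n \cdot (-\ln\eps_n))$ uniformly, i.e. $o(1)$ after dividing, which requires that the tail $\sum_{k\ge 1}\Lambda^{n-k}\rho_k$ be controlled; since $\rho_k = O(\eps\, x_k^{-\Lambda} )$ decays like a fixed geometric rate faster than $\Lambda^k$ only after the transient, one needs the first-iterate analysis ($x_0=\eps$, so $x_1 = C\eps^\Lambda + \eps \approx C\eps^\Lambda$, etc.) to see that the transient contributes only an $O(\eps^{c})$, absorbed into $R_n$. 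The connection equation then reads $L_n = -\ln B(\eps_n) = -\ln B + O(\eps_n)$, so
\[
-\ln B = \frac{-\ln C}{1-\Lambda} + \Lambda^n(-\ln\eps_n) + o(\Lambda^n\cdot(-\ln\eps_n)),
\]
whence $-\ln\eps_n = \Lambda^{-n}\bigl(\tfrac{\ln C}{1-\Lambda} - \ln B\bigr)\bigl(1 + o(1)\bigr)$.

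Finally I would take one more logarithm. Writing $\Lambda^n(-\ln\eps_n) = D(1 + \sigma_n)$ with $D = \frac{\ln C}{1-\Lambda} - \ln B > 0$ (positive by the smallness assumption on $B$) and $\sigma_n = o(1)$, we get $\ln(-\ln\eps_n) = -n\ln\Lambda + \ln D + \ln(1+\sigma_n) = -n\ln\Lambda + \beta + \sigma_n + O(\sigma_n^2)$, which already recovers~\eqref{eqn:ass2}. To extract the next term $\theta\Lambda^n$ I would sharpen the estimate of $\sigma_n$: from the recursion, $L_n - L_\infty = \Lambda^n(L_0 - L_\infty) + \Lambda^n\sum_{k\ge 1}\Lambda^{-k}\rho_k$, and substituting $L_0 = -\ln\eps_n = \Lambda^{-n}D(1+o(1))$ into the connection equation gives $D(1+\sigma_n) = D + L_\infty\Lambda^n \cdot(\text{something}) + \dots$ — carrying out this second-order expansion carefully, the coefficient of $\Lambda^n$ in $\sigma_n$ comes out to be $-L_\infty/D = -\bigl(\tfrac{\ln C}{1-\Lambda}-\ln B\bigr)^{-1}\tfrac{\ln C}{1-\Lambda} = \theta$, matching the claim, with everything else being genuinely $o(\Lambda^n)$ (this is where the $O(\eps_n)$ from $B(\eps_n)$, being $O(e^{-\mathrm{const}\cdot\Lambda^{-n}})$, is super-exponentially negligible, and the transient-orbit error is $O(\eps_n^c) = o(\Lambda^n)$). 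The passage to the general map $f_\eps(x) = Cx^{\Lambda(\eps)} + \eps(1+\psi(x^{\Lambda(\eps)},\eps))$ is then deferred to the subsequent part of the paper, using the sandwiching~\eqref{eq:g_and_h} by $g_\eps, h_\eps$ together with~\eqref{eq:e_n_ineq} to show the $\psi$-correction does not affect the stated terms.
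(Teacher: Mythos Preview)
Your approach is viable but differs from the paper's. You propose to track the recursion $L_{k+1}=\Lambda L_k-\ln C+\rho_k$ directly and bound the accumulated residual $\sum\Lambda^{n-1-k}\rho_k$. The paper instead avoids all orbit-wise error bookkeeping by sandwiching: using Lemma~\ref{lem:est} it bounds $f_\eps$ on $\{x>\eps/2\}$ between two \emph{pure power maps} $g_\eps(x)=(C-k\eps^{1-\Lambda})x^\Lambda$ and $h_\eps(x)=(C+k\eps^{1-\Lambda})x^\Lambda$, introduces auxiliary sequences $\delta_n,\mu_n$ defined by $g_{\eps_n}^n(\delta_n)=h_{\eps_n}^n(\mu_n)=B(\eps_n)$, shows $\mu_n<\eps_n<\delta_n$ by monotonicity, and then uses the closed-form iterate $y^n(x)=C^{(1-\Lambda^n)/(1-\Lambda)}x^{\Lambda^n}$ to write $\ln(-\ln\delta_n)$ and $\ln(-\ln\mu_n)$ explicitly. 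Both bounds expand identically up to $O(\Lambda^{2n})+O(\eps_n^{1-\Lambda})$, squeezing $\ln(-\ln\eps_n)$ via~\eqref{eq:e_n_ineq}. The payoff of the paper's route is that the additive $+\eps$ and the $\eps$-dependence of the exponent $\Lambda(\eps)$ are absorbed in one stroke into the coefficient perturbation $C\mapsto C\pm k\eps^{1-\Lambda}$, after which everything is an exact formula; your route is more elementary but requires the separate observation that $|\rho_k|\le C^{-1}\eps^{1-\Lambda}$ uniformly (since $x_k\ge\eps$ for all $k$), which sums to $O(\eps_n^{1-\Lambda})=o(\Lambda^n)$.

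Two small slips to fix. First, $x_k$ \emph{increases} from $\eps$ toward $B$ (for $\Lambda<1$ one has $Cx^\Lambda\gg x$ near $0$), so the correction $\eps C^{-1}x_k^{-\Lambda}$ is largest at $k=0$ and decreases thereafter, not the other way around. Second, the coefficient of $\Lambda^n$ that falls out of your expansion is $L_\infty/D$, not $-L_\infty/D$; since $L_\infty=-\ln C/(1-\Lambda)$, this indeed equals $-D^{-1}\ln C/(1-\Lambda)=\theta$, so your final identification is right but the intermediate sign is flipped.
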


\begin{proof}[Proof for the model case.]
For the function $y(x)=Cx^\Lambda$ its $n$-th iterate has the form
$$
y^n(x)=C^{1+\Lambda+\dots+\Lambda^{n-1}}x^{\Lambda^n}=C^{\frac{1-\Lambda^n}{1-\Lambda}}x^{\Lambda^n}.
$$

Set $C(\eps) = C-k\eps^{1-\Lambda}$ for $\eps > 0$. By the same reasoning, the equation for $\delta_n$, $B(\eps_n) = g_{\eps_n}^n(\delta_n)$, takes the form
$$
B(\eps_n)=C(\eps_n)^{\frac{1-\Lambda^n}{1-\Lambda}}\delta_n^{\Lambda^n}.
$$
Solving this for $\delta_n$ and taking the logarithm twice\footnote{Note that we may choose $B(\eps)$ sufficiently small so that the quantity
$
-\ln B(\eps_n)+\frac{1-\Lambda^n}{1-\Lambda}\ln C(\eps_n)
$
is positive and bounded away from zero for all $n$, and hence its logarithm is well defined.}, we obtain
$$
\ln(-\ln \delta_n)=-n\ln \Lambda+\ln\left(-\ln B(\eps_n)+\frac{1-\Lambda^n}{1-\Lambda}\ln{C(\eps_n)}\right).
$$
An analogous identity holds for $\mu_n$. Therefore, \eqref{eq:e_n_ineq} becomes
\[
-n\ln\Lambda+\ln\!\left(-\ln B(\eps_n)+\frac{1-\Lambda^n}{1-\Lambda}\ln C(\eps_n)\right)
<\ln(-\ln \eps_n),
\]
\[
\ln(-\ln \eps_n)
<-n\ln\Lambda+\ln\!\left(-\ln B(\eps_n)+\frac{1-\Lambda^n}{1-\Lambda}\ln \hat C(\eps_n)\right),
\]
where $\hat C(\eps_n)=C+k\eps_n^{1-\Lambda}$.

In particular, these inequalities imply that $\iota^{-\Lambda^{-n}}<\eps_n<I^{-\Lambda^{-n}}$ for some $\iota > I > 1$.

 Consider the term
\[
\ln\!\left(-\ln B(\eps_n)+\frac{1-\Lambda^n}{1-\Lambda}\ln C(\eps_n)\right).
\]
This term is of the form $\ln(a+b)$ where
\[
a = \frac{1}{1-\Lambda}\ln C(\eps_n)-\ln B(\eps_n),\qquad
b = -\frac{\Lambda^n}{1-\Lambda}\ln C(\eps_n).
\]
Since $b\to 0$ as $n\to+\infty$ while $a$ is positive and  bounded away from zero, we can write
\[
\ln(a+b)=\ln a + \ln\bigl(1+\tfrac{b}{a}\bigr)=\ln a + \frac{b}{a} + O(b^2),\qquad b\to0.
\]
In our case, this yields
\[
\begin{aligned}
\ln\Bigl(\frac{1-\Lambda^n}{1-\Lambda}\ln C(\eps_n)-\ln B(\eps_n)\Bigr)
&=\ln\Bigl(\frac{\ln C(\eps_n)}{1-\Lambda}-\ln B(\eps_n)\Bigr)\\
&\qquad{}-\Bigl(\frac{\ln C(\eps_n)}{1-\Lambda}-\ln B(\eps_n)\Bigr)^{-1}\frac{\Lambda^n}{1-\Lambda}\ln C(\eps_n)
+O(\Lambda^{2n}).
\end{aligned}
\]
Since for any smooth function $y$ we have $y(x+\delta)=y(x)+O(\delta)$ as $\delta\to0$, and since the argument of the logarithm in the above expression is bounded away from zero under the appropriate choice of $B(\eps)$, we obtain
\begin{equation}\label{eq:taylor_expansion_in_3.2}
\begin{aligned}
\ln\Bigl(\frac{1-\Lambda^n}{1-\Lambda}\ln C(\eps_n)-\ln B(\eps_n)\Bigr)
&=\ln\Bigl(\frac{\ln C}{1-\Lambda}-\ln B\Bigr)
-\Bigl(\frac{\ln C}{1-\Lambda}-\ln B\Bigr)^{-1}\frac{\Lambda^n}{1-\Lambda}\ln C\\
&\qquad{}+O(\Lambda^{2n})+O(\eps_n^{1-\Lambda}).
\end{aligned}
\end{equation}
Now, since $\eps_n<I^{-\Lambda^{-n}}$ with $I>1$, it follows that
\[
O(\eps_n^{1-\Lambda})=O\!\bigl( I^{-(1-\Lambda)\Lambda^{-n}}\bigr)=o(\Lambda^{2n}),
\]
the latter relation being valid because
\[
\frac{I^{-(1-\Lambda)\Lambda^{-n}}}{\Lambda^{2n}}
=\left(\frac{I^{-(1-\Lambda)}}{(\Lambda^{2n})^{\Lambda^n}}\right)^{\Lambda^{-n}}
<(1-\tilde\delta)^{\Lambda^{-n}}=o(1),\qquad n\to+\infty,
\]
for some small positive $\tilde\delta$.

The same argument can be applied to~\eqref{eq:taylor_expansion_in_3.2} with $C(\eps_n)$ replaced with $\hat{C}(\eps_n)$. This shows that both $\ln(-\ln \delta_n)$ and $\ln(-\ln \mu_n)$ admit the same asymptotic expansion
\[
-n\ln\Lambda+\beta+\theta\Lambda^n+O(\Lambda^{2n}), \quad n \to +\infty.
\]
By~\eqref{eq:e_n_ineq}, the same applies to $\ln(-\ln\eps_n)$.
This proves Lemma~\ref{lem:taylor} in the model case.
\end{proof}

\subsection{Asymptotics of the sparkling saddle connections: general case}\label{sec:gen_case}
\begin{proof}[Proof of Lemma~\ref{lem:taylor} in the general case.]
Recall that in a special $C^1$-chart on the semitransversal we have
\[f_\eps(x) =
Cx^{\Lambda(\varepsilon)} + \eps(1+\psi(x^{\Lambda(\eps)},\eps)), \quad \psi(0, \eps) = o(1), \quad \eps \searrow 0,\]
and the connection equation has a less convenient form
\[f^n_{\eps_n}(\eps_n(1 + \psi(0,  \eps_n))) = B(\eps_n).\]
For the sequence $\tilde \eps_n=\eps_n(1 + \psi(0,  \eps_n)) = \eps_n\cdot (1+o(1))$, the same argument as above works\footnote{Note that it is possible that $\tilde{\eps}_n < \eps_n$, but we can be sure that $\tilde{\eps}_n > \eps_n/2$ for large~$n$. This is why we had to discuss earlier that our estimates $g_\eps < f_\eps < h_\eps$ also work in the domain $\{x \ge \eps/2\}$.}, and we obtain the same expansion as in the model case:
$$
\ln(-\ln \tilde \eps_n)=-n\ln\Lambda+\beta+\theta\Lambda^n+R_n.
$$

On the other hand,
$$
\ln(-\ln \tilde \eps_n)=\ln(-\ln \eps_n(1+o(1)))=\ln(-\ln \eps_n+o(1))=
\ln(-\ln \eps_n)+o(1/\ln{\varepsilon_n}).
$$
But $o(1/\ln{\varepsilon_n})=o(\Lambda^n)$ as $n \to +\infty$, and we are done:
\[
\ln(-\ln \eps_n)=-n\ln\Lambda+\beta+\theta\Lambda^n+o(\Lambda^n), \quad n \to +\infty.
\]
\end{proof}
Note that the lemma can be applied to the sequence of saddle connections between the saddles $I$ and $L$ with $\Lambda = \lambda$ and for connections between $E$ and $L$ with $\Lambda=(\lambda^2\mu)^{-1}$.

\subsection{Topology of two pairs of arithmetic progressions on the line}\label{sec:seq}

\begin{proposition}\label{prop:arith}
For irrational numbers $A,\tilde A$, the equivalence
$$
\forall m, n \in \mathbb{Z}\colon \quad A n+\tau\leq m\Leftrightarrow \tilde A n+\tilde t\leq m
$$
implies $A=\tilde A$ and $\tau=\tilde t$.
\end{proposition}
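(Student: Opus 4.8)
The plan is to translate the stated equivalence into the language of ceilings, read off $A$ from an asymptotic ratio, and then pin down $\tau$ by a density (Kronecker) argument for which the irrationality of $A$ is exactly what is needed.

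First I would fix $n \in \mathbb{Z}$ and note that the hypothesis says the two subsets $\{m \in \mathbb{Z} : An + \tau \le m\}$ and $\{m \in \mathbb{Z} : \tilde A n + \tilde t \le m\}$ of $\mathbb{Z}$ coincide. Each is an ``upper ray'' $\{m : m \ge k\}$, so they agree if and only if they have the same least element; call this common minimum $f(n)$. By construction $f(n) \in [An + \tau,\, An + \tau + 1)$ and simultaneously $f(n) \in [\tilde A n + \tilde t,\, \tilde A n + \tilde t + 1)$. Dividing by $n$ and letting $n \to +\infty$ squeezes $f(n)/n$ to $A$ and to $\tilde A$, so $A = \tilde A$. (This half uses neither irrationality nor anything about $\tau$.)

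Now I would assume $A = \tilde A$ and argue by contradiction that $\tau = \tilde t$. By the obvious symmetry of the hypothesis in $(A,\tau) \leftrightarrow (\tilde A,\tilde t)$, it suffices to rule out $c := \tilde t - \tau > 0$. The non-trivial half of the equivalence, ``$An + \tau \le m \Rightarrow An + \tilde t \le m$'', says precisely that for every $n$ there is no integer $m$ with $An + \tau \le m < An + \tau + c$. In particular $An + \tau \notin \mathbb{Z}$ for every $n$ (else $m = An+\tau$ would be such an $m$), and, writing $\{\cdot\}$ for the fractional part, the condition rewrites as $\lceil An + \tau\rceil \ge An + \tau + c$, i.e.
\[
\{An + \tau\} \le 1 - c \qquad \text{for all } n \in \mathbb{Z}.
\]
If $c \ge 1$ this is already absurd; if $0 < c < 1$ it contradicts the density of $\{(An+\tau) \bmod 1 : n \in \mathbb{Z}\}$ in $[0,1)$, which holds because $A$ is irrational. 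Hence $c \le 0$, and by symmetry $c = 0$.

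I do not expect a serious obstacle here: the argument is essentially routine once the right reformulation (the least-$m$ function $f$, then the ceiling inequality) is in place. The only point that needs a moment's care is the at-most-one value of $n$ for which $An+\tau$ could a priori be an integer, and this turns out to be harmless — once $c>0$ is assumed, that case is excluded by the hypothesis itself. The one genuinely essential ingredient is the density of $\{An \bmod 1\}$, i.e. Kronecker's theorem; this is where the irrationality hypothesis is used, and the statement indeed fails without it (e.g.\ $A = \tilde A = 0$, $\tau = 1/3$, $\tilde t = 1/2$ satisfy the equivalence but $\tau \ne \tilde t$).
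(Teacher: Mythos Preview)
Your proof is correct and follows essentially the same approach as the paper. The only cosmetic difference is that the paper fixes $m$ and considers the largest admissible $n$ (i.e.\ $n(m)=\lfloor (m-\tau)/A\rfloor$) to extract $A$ from the ratio $n(m)/\tilde n(m)$, whereas you dually fix $n$ and use the least admissible $m$ (i.e.\ $f(n)=\lceil An+\tau\rceil$); the density step is identical, the paper writing it as ``there exist $n,m$ with $m-An\in(\tau,\tilde t)$'' rather than in your fractional-part language.
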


\begin{proof} Consider the case when $A>0$. Let
$$
n(m)=\max\{n \mid A n+\tau\leq m \},
$$    
$$
\tilde n(m)=\max\{n \mid \tilde An+\tilde t\leq m \}.
$$

Then we have $n(m)=\tilde n(m)$ and $n(m)=\big[\frac{m-\tau}{A} \big]$. Hence,
$$
1=\lim_{n\to\infty} \frac{n(m)}{\tilde n(m)}=\frac{\tilde A}{A}.
$$

Now, assume that $\tau<\tilde t$. As $A$ is irrational, there exist $n,m \in \mathbb Z$ such that
$$
m-A n\in(\tau,\tilde t).
$$
Thus $A n+\tau<m<A n+\tilde t,$
which contradicts the assumptions of the proposition.

The cases of $\tau>\tilde t$ and $A<0$ are similar. Note that in the case $A>0$ it suffice to have the equivalence for large positive $n, m$ instead of all integers.
\end{proof}

{\bf Definition.} For a pair of arithmetic progressions $(\alpha n+\beta,\;\gamma m+\delta)$, we define their \emph{relative density $A$} and \emph{normalized difference} of the free terms $\tau$ by the following expressions:
$$
A=\frac{\alpha}{\gamma}, \qquad\tau=\frac{\beta-\delta}{\gamma}.
$$

{\bf Definition.} We shall say that two pairs of sequences on the line are \emph{topologically equivalent at $+\infty$} if there exists a homeomorphism of the line, defined in a neighborhood of $+\infty$, which maps the first sequence of the first pair to the first sequence of the second pair and the second sequence of the first pair to the second sequence of the second pair.

\begin{lemma}\label{lem:ar_pr_eq} Two pairs of increasing arithmetic progressions with irrantional relative densities are topologically equivalent iff their relative densities coincide, and the normalized differences of the free terms coincide modulo the Abelian group generated by their relative density and 1. In formulas:
$$
(\alpha n+\beta,\;\gamma m+\delta)\sim (\tilde \alpha n+\tilde\beta,\;\tilde\gamma m+\tilde\delta) \Leftrightarrow
$$
$$
\frac{\alpha}{\gamma}=\frac{\tilde \alpha}{\tilde \gamma}; \qquad \frac{\beta-\delta}{\gamma}=\frac{\tilde \beta-\tilde \delta}{\tilde  \gamma}\mod\left(1,\frac{\alpha}{\gamma}\right).
$$
\end{lemma}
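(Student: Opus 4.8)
The plan is to prove both directions of the equivalence, treating the ``only if'' direction as the main content and the ``if'' direction as a construction. Throughout I will use the normalization that a homeomorphism of a neighborhood of $+\infty$ is either increasing or decreasing near $+\infty$; since both progressions are increasing and unbounded, an orientation-reversing homeomorphism cannot match them (it would send a sequence unbounded above to one unbounded below), so every equivalence is orientation-preserving near $+\infty$. Hence I may restrict attention to increasing homeomorphisms $\varphi$ defined on some ray $(T,+\infty)$.

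For the ``only if'' direction, suppose $\varphi$ realizes the equivalence $(\alpha n+\beta,\ \gamma m+\delta)\sim(\tilde\alpha n+\tilde\beta,\ \tilde\gamma m+\tilde\delta)$. First I would recover the relative densities: since $\varphi$ is an increasing homeomorphism, for each large real $x$ the number of terms of the first progression $\le x$ that get mapped below $\varphi(x)$ equals the number of terms of the second first-progression $\le \varphi(x)$, up to a bounded error; dividing by the analogous count for the second progressions and letting $x\to+\infty$, the step-ratios cancel and give $\alpha/\gamma=\tilde\alpha/\tilde\gamma$, i.e.\ $A=\tilde A$. (This is the same counting-frequency argument as in the proof of Proposition~\ref{prop:arith}, now carried through a homeomorphism rather than the identity.) Next, normalize: replace the line coordinate by $t=(x-\delta)/\gamma$ on the source and $\tilde t=(\tilde x-\tilde\delta)/\tilde\gamma$ on the target. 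In these affine coordinates the second progression of each pair becomes $\mathbb Z$ (large integers), and the first becomes $A n+\tau$ resp.\ $A n+\tilde\tau$, where $\tau=(\beta-\delta)/\gamma$, $\tilde\tau=(\tilde\beta-\tilde\delta)/\tilde\gamma$, and $\varphi$ becomes an increasing homeomorphism $h$ (near $+\infty$) with $h(\mathbb Z)\subset\mathbb Z$ eventually and $h(\{A n+\tau\})\subset\{A n+\tilde\tau\}$ eventually. Because $h$ is a bijection between the large integers and, by the density computation, has asymptotic slope $1$, standard bookkeeping shows $h(m)=m+c$ for all large integers $m$, for a fixed integer $c$. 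The hard part is to pin down what $h$ does to the first progression: I would use that $h$ is monotone and sends $\mathbb Z$ to $\mathbb Z+c$ to sandwich each point $A n+\tau$. For a given large $n$, the term $A n+\tau$ lies in a unique interval $(k,k+1)$ with $k=\lfloor A n+\tau\rfloor$; monotonicity forces $h(A n+\tau)\in(k+c,k+c+1)$, and since $h(A n+\tau)$ must be a term $A n'+\tilde\tau$ of the target first progression lying in that same unit interval, we get $\lfloor A n'+\tilde\tau\rfloor=k+c=\lfloor A n+\tau\rfloor+c$. Comparing the two ``take-floor'' functions for all large $n$ and using irrationality of $A$ (equidistribution of $\{A n+\tau\}$ mod $1$) forces the two increasing enumerations to agree up to the shift, from which $n'=n$ eventually and $\{A n+\tilde\tau\}-\{A n+\tau\}$ is eventually constant mod $1$; hence $\tilde\tau-\tau\in\mathbb Z+A\mathbb Z$. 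Translating back, $\tau\equiv\tilde\tau\pmod{(1,A)}$ with $A=\alpha/\gamma$.

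For the ``if'' direction I would construct $\varphi$ explicitly. Work again in the normalized coordinates, so the two pairs are $(\mathbb Z,\ \{A n+\tau\})$ and $(\mathbb Z,\ \{A n+\tilde\tau\})$ with $\tilde\tau=\tau+p+qA$, $p,q\in\mathbb Z$. I first reduce to the case $\tilde\tau=\tau$: the affine map $x\mapsto x$ composed with the observation that adding $p$ to all terms is realized by the translation $x\mapsto x+p$ (which sends $\mathbb Z\to\mathbb Z$ and $\{A n+\tau\}\to\{A n+\tau+p\}$), and adding $qA$ is realized by noticing that $\{A n+\tau+qA\}=\{A(n+q)+\tau\}$ is the same set as $\{A n+\tau\}$ up to reindexing, so no move is needed. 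Thus it suffices to build, for two progressions with identical parameters $A,\tau$, the identity — trivial — and more importantly, given $A=\tilde A$ and $\tau=\tilde\tau$, to note the two pairs are literally the same pair of sets, so $\varphi=\mathrm{id}$ works. Unwinding the affine normalizations, $\varphi$ is the composition of two affine maps and a translation, hence an (affine, in particular) increasing homeomorphism of the line, and by construction it matches first progression to first and second to second on a neighborhood of $+\infty$. This completes the ``if'' direction.

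The main obstacle is the rigidity step in the ``only if'' direction: extracting from a merely continuous, monotone $h$ the conclusions $h|_{\mathbb Z}=\mathrm{shift}$ and then the congruence for $\tau$. The key tools are (i) that an increasing homeomorphism of a ray preserves the cyclic/linear order of points and of the integer lattice, so the ``which unit interval'' combinatorics is transported faithfully, and (ii) that irrationality of $A$ makes the sequence $\{A n+\tau\}\bmod 1$ equidistributed, so the only way two floor-functions $n\mapsto\lfloor A n+\tau\rfloor$ and $n\mapsto\lfloor A n+\tilde\tau\rfloor$ can differ by a constant for all large $n$ is for $\tilde\tau-\tau$ to lie in $\mathbb Z+A\mathbb Z$ — exactly the obstruction already seen in Proposition~\ref{prop:arith}, now upgraded from ``the identity realizes the combinatorial equivalence'' to ``some homeomorphism does.'' I expect the write-up of this step to be the longest part of the proof; everything else is affine bookkeeping.
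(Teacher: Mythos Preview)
Your proposal is correct and follows the same strategy as the paper: normalize by affine maps so that the second progressions become $\mathbb Z$, then exploit that an increasing homeomorphism preserves order. The paper is more direct in one place: the index shift you establish for $\mathbb Z$ via ``asymptotic slope $1$'' holds just as trivially for the \emph{first} progression --- an increasing bijection between two increasing sequences is automatically an index shift $n\mapsto n+s$ --- so there is no need for your unit-interval and equidistribution argument to pin down $n'$. With both shifts $h(m)=m+p$ and $h(An+\tau)=\tilde A(n+s)+\tilde\tau$ in hand, order preservation reads $An+\tau\le m \Leftrightarrow \tilde A(n+s)+\tilde\tau\le m+p$, i.e.\ $An+\tau\le m \Leftrightarrow \tilde A n+(\tilde A s+\tilde\tau-p)\le m$, and Proposition~\ref{prop:arith} then delivers $A=\tilde A$ and $\tau=\tilde\tau+As-p$ in one stroke, rather than proving $A=\tilde A$ separately by a counting argument and then re-deriving the content of the proposition via equidistribution.
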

\begin{proof} Apply the inverse mappings of $x\mapsto \gamma x+\delta$ and $x\mapsto \tilde \gamma x+\tilde\delta$ to the first sequences in the pairs and reduce the lemma to the proposition. In more detail, denote $x_n=\alpha \gamma^{-1}n + (\beta-\delta)\gamma^{-1}$, $\tilde x_n=\tilde\alpha \tilde\gamma^{-1}n + (\tilde\beta-\tilde\delta)\tilde\gamma^{-1}$.

Let $h$ be a homeomorphism that establishes the equivalence of the new two pairs:
$$
(x_n, m)\sim(\tilde x_n,m).
$$
Then for some integers $p$ and $s$ 
$$
h(x_n)=\tilde x_{n+s}, \qquad h(m)=m+p.
$$   
Denote $A=\alpha \gamma^{-1}$, $\tau=(\beta-\delta)\gamma^{-1}$, $\tilde A=\tilde\alpha \tilde\gamma^{-1}$, $\tilde \tau=(\tilde\beta-\tilde\delta)\tilde\gamma^{-1}$, and $\tilde t=(\tilde\beta-\tilde\delta)\tilde\gamma^{-1}+As-p$.

Then
$$
An+\tau\leq m \Leftrightarrow  \tilde A n+\tilde t\leq m,
$$
and, by Proposition~\ref{prop:arith}, we have $A= \tilde A$, $\tau=\tilde t$ and thus $\tau=\tilde\tau\mod (1,A)$.

The inverse statement of the lemma is obvious. 
\end{proof}

\subsection{Topology of two pairs of slightly perturbed arithmetic progressions}

\textbf{Definition.}
 Let us call a sequence of the form
$$
x_n=A n+\tau+o(1), \qquad n\to +\infty,
$$
\emph{a slightly perturbed arithmetic progression}.

\begin{remark} By Lemma \ref{lem:ass1}, the sequence $\ln(-\ln\eps_n)$ is a slightly perturbed arithmetic progression.
\end{remark}

For slightly perturbed arithmetic progressions, the \emph{only if} part of the previous lemma holds true.

\begin{lemma} If two pairs of slightly perturbed increasing arithmetic progressions are topologically equivalent, then their relative densities coinside, and the normalized differencies of the free terms coinside modulo the Abelian group generated by their common relative density and 1.
\end{lemma}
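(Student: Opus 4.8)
The plan is to reduce everything to Proposition~\ref{prop:arith}, exactly as in the proof of Lemma~\ref{lem:ar_pr_eq}; the one genuinely new ingredient is that the $o(1)$ tails must not smear out the order relations between the two progressions of a pair, since it is those relations that carry the information about the normalized difference. Write the first pair as $u_n=\alpha n+\beta+\epsilon_n$, $v_m=\gamma m+\delta+\eta_m$ with $\epsilon_n,\eta_m\to0$ and $\alpha,\gamma>0$, and the second pair as $\tilde u_n=\tilde\alpha n+\tilde\beta+\tilde\epsilon_n$, $\tilde v_m=\tilde\gamma m+\tilde\delta+\tilde\eta_m$ with $\tilde\alpha,\tilde\gamma>0$. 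First I would note that each of the four sequences is eventually strictly increasing, since $|x_{n+1}-x_n-\alpha|\le|\epsilon_{n+1}|+|\epsilon_n|\to0$; so after discarding finitely many terms I may treat them as strictly increasing. The equivalence is realized by a homeomorphism $h$ of a ray $(R,\infty)$ onto a ray; such $h$ is necessarily increasing, and it carries a tail of $\{u_n\}$ bijectively onto a tail of $\{\tilde u_n\}$ and a tail of $\{v_m\}$ onto a tail of $\{\tilde v_m\}$. Being increasing on a strictly increasing sequence, $h$ acts on each by an index shift: there are integers $s,p$ with $h(u_n)=\tilde u_{n+s}$ and $h(v_m)=\tilde v_{m+p}$ for all large $n,m$.

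Next I would prove $A=\tilde A$ (coincidence of relative densities) by a point count. The number of $u$-points in $[v_{m_0},v_{m_1}]$ is $\tfrac{m_1-m_0}{A}+O(1)$, and the number of $\tilde u$-points lying between the $(m_0+p)$-th and $(m_1+p)$-th $\tilde v$-points is $\tfrac{m_1-m_0}{\tilde A}+O(1)$. Since $h$ maps $[v_{m_0},v_{m_1}]$ onto $[\tilde v_{m_0+p},\tilde v_{m_1+p}]$, it puts the $u$-points of the first interval in bijection with the $\tilde u$-points of the second, so these two counts agree; dividing by $m_1-m_0$ and letting $m_1-m_0\to\infty$ yields $A=\tilde A$.

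For the free terms I would exploit that $h$ is strictly increasing: for all large $n,m$ one has $u_n\le v_m\Leftrightarrow\tilde u_{n+s}\le\tilde v_{m+p}$. Dividing the left inequality by $\gamma$ and the right one by $\tilde\gamma$, and using $\tilde A=A$, this rewrites as
$$
An+\tau\le m+\rho_{n,m}\ \Longleftrightarrow\ An+\tilde t\le m+\tilde\rho_{n,m},
$$
with $\tau=\tfrac{\beta-\delta}{\gamma}$, $\tilde t=As+\tfrac{\tilde\beta-\tilde\delta}{\tilde\gamma}-p$, and $\rho_{n,m},\tilde\rho_{n,m}\to0$. If I can show $\tau=\tilde t$, then $\tau-\tfrac{\tilde\beta-\tilde\delta}{\tilde\gamma}=As-p$ lies in the Abelian group generated by $A$ and $1$, which is exactly the asserted congruence. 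To get $\tau=\tilde t$ I would adapt the argument of Proposition~\ref{prop:arith}, but with a witness robust against the $o(1)$ errors: supposing $\tau<\tilde t$ and putting $\delta_0=\tfrac{\tilde t-\tau}{3}>0$, I use that since $A$ is irrational the set $\{m-An:\ n\ge N,\ m\in\mathbb Z\}$ is dense in $\mathbb R$ for every $N$, so I may pick $n,m$ arbitrarily large with $m-An\in[\tau+\delta_0,\tilde t-\delta_0]$; taking them large enough that $|\rho_{n,m}|,|\tilde\rho_{n,m}|<\delta_0$, the left relation holds while the right one fails, contradicting the equivalence. The case $\tau>\tilde t$ is symmetric, giving $\tau=\tilde t$.

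The main obstacle is precisely this last point. In the unperturbed setting of Proposition~\ref{prop:arith} one uses the single witness $m-An\in(\tau,\tilde t)$, but here such a witness can be killed by the perturbation; one must instead produce a configuration in which the two progressions are separated by a \emph{fixed} amount $\delta_0>0$, and equidistribution of $\{An\bmod1\}$ is what supplies such configurations with $n$ (hence $m$) as large as needed. Everything else is bookkeeping on top of Proposition~\ref{prop:arith} and the monotonicity and index-shift facts established in the first paragraph.
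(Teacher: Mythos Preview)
Your proposal is correct and follows essentially the same route as the paper: reduce to Lemma~\ref{lem:ar_pr_eq} and Proposition~\ref{prop:arith}, with the single new idea being to choose the witness $m-An$ in a strictly smaller subinterval of $(\tau,\tilde t)$ so that the $o(1)$ tails cannot flip the inequalities. The paper does exactly this (it takes the middle $80\%$ subinterval $(0.9\tau+0.1\tilde t,\,0.9\tilde t+0.1\tau)$ where you take the middle third via $\delta_0=(\tilde t-\tau)/3$), and otherwise refers back to the unperturbed argument; your write-up is simply more explicit, in particular spelling out the index-shift and the counting argument for $A=\tilde A$.
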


\begin{proof} The analog of Proposition~\ref{prop:arith} holds true for slightly perturbed arithmetic progressions (one only needs to replace the condition $m-A n\in(\tau,\tilde t)$ by a slightly more restrictive  $m-A n\in(0.9 \tau+0.1\tilde t,0.9\tilde t+0.1\tau)$ to obtain the full proof). The rest of the proof is the same as for Lemma~\ref{lem:ar_pr_eq}.
\end{proof}

\begin{corollary} \label{cor:shifts} Consider two pairs of equivalent slightly perturbed increasing arithmetic progressions
$$(x_n,y_m)\sim (\tilde x_n,\tilde y_m)$$
with irrational density~$A$. If their density and normalized differencies of the free terms satisfy
$$
\tau-\tilde\tau=As-p
$$
with $s, p \in \mathbb Z$, 
then for the corresponding homeomorphism $h$ we must have $h(x_n)=\tilde x_{n+s}$, $h(y_m)=y_{m+p}$.
\end{corollary}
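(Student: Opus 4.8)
The plan is to reduce the statement to the \emph{uniqueness} of a representation $\tau-\tilde\tau=As+p$ with $s,p\in\mathbb Z$, which is automatic because $A$ is irrational.

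First I would note that the homeomorphism $h$ establishing the equivalence is increasing in a neighbourhood of $+\infty$: a decreasing homeomorphism of an interval $(N,+\infty)$ has bounded image, which is incompatible with carrying the sequences $x_n\to+\infty$, $y_m\to+\infty$ onto cofinal subsets of $\{\tilde x_n\}$, $\{\tilde y_m\}$. An increasing homeomorphism that maps one strictly increasing sequence tending to $+\infty$ bijectively onto another induces an order isomorphism between their tails, and such an isomorphism of two copies of $\mathbb N$ is a shift by a fixed integer. Hence there are integers $s_0,p_0$ with $h(x_n)=\tilde x_{n+s_0}$ and $h(y_m)=\tilde y_{m+p_0}$ for all large $n,m$.

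Next I would show that these shifts are pinned down by the asymptotic data, namely $\tau-\tilde\tau=As_0+p_0$ (with the sign convention of the statement). This is exactly the bookkeeping already performed in the proof of Lemma~\ref{lem:ar_pr_eq} and its perturbed analogue: applying the inverses of the affine normalizations of the second progressions reduces the two pairs to the form $(An+\tau+o(1),\,m)$ and $(\tilde An+\tilde\tau+o(1),\,m)$; the chain of equivalences $x_n\le m\Leftrightarrow h(x_n)\le h(m)$, i.e. $\tilde A(n+s_0)+\tilde\tau+o(1)\le m+p_0$, rewrites as $An+(\tilde\tau+As_0-p_0)+o(1)\le m$, and the perturbed version of Proposition~\ref{prop:arith} forces $\tilde A=A$ together with $\tau=\tilde\tau+As_0-p_0$. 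A relabelling of the sign of $p_0$ brings this to the form $\tau-\tilde\tau=As_0+p_0$ used in the statement.

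Finally, the hypothesis provides a second representation $\tau-\tilde\tau=As+p$ with $s,p\in\mathbb Z$. Subtracting, $A(s-s_0)=p_0-p\in\mathbb Z$; since $A$ is irrational this is possible only if $s=s_0$ and $p=p_0$. Therefore $h(x_n)=\tilde x_{n+s}$ and $h(y_m)=\tilde y_{m+p}$ for all large $n,m$, which is the assertion. The one genuinely nontrivial ingredient is the middle step --- tracking how the integer shifts of $h$ enter the normalized difference of the free terms while controlling the $o(1)$ perturbations --- but this is supplied by the proof of the preceding lemma, so nothing new is required here.
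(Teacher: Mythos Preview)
Your proof is correct and follows essentially the same approach as the paper: the paper's proof first checks that the representation $\tau-\tilde\tau=As+p$ is unique by the irrationality of $A$, and then simply says ``the rest is obvious from the proof of the lemmas.'' You have spelled out what that reference means --- that the homeomorphism forces integer shifts $s_0,p_0$ which, via the bookkeeping of Lemma~\ref{lem:ar_pr_eq} and Proposition~\ref{prop:arith}, satisfy $\tau-\tilde\tau=As_0+p_0$ (up to the sign convention you note) --- and then applied the same uniqueness step.
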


\begin{proof} We can check that $p$ and $s$ are uniquely defined.
Indeed, if another pair $p_2, s_2$ satisfied the same relation:
\[
\tau-\tilde\tau=As_2-p_2,
\]
then
\[
As-p=As_2-p_2
\]
so $(s-s_2)A=p-p_2$, which would imply $A$ rational, contradicting the irrationality of~$A$.

The rest is obvious from the proof of the lemmas.
\end{proof}

\section{The new invariants}
\subsection{Sketch of the proof}
{\bf Definition.} We call a sequence $(x_n)_{n \in \mathbb{N}}$ \emph{an exponentially perturbed arithmetic progression} with \emph{base}~$\lambda < 1$ if this sequence admits an expansion
$$
x_n=A n+\tau+\xi\lambda^n+ o(\lambda^n), \qquad n\to \infty,
$$
with $A, \tau, \xi \in \mathbb{R}$.

Note that the base of an exponentially perturbed arithmetic progression is correctly (uniquely) defined.

{\bf Definition.} For a pair of exponentially perturbed arithmetic progressions
\begin{equation}\label{eq:ar_prog_pair_form}
(\alpha n + \beta +\xi \nu_1^n + o(\nu_1^n), \; \gamma n + \delta + \psi \nu_2^n + o(\nu_2^n)),
\end{equation}
we define their \emph {relative scale coefficient} by the following expression:
$$
\psi\nu_2^\tau-\xi
$$
with $\tau=\frac{\beta-\delta}{\gamma}$.

In this section we will deal with exponentially perturbed arithmetic progressions that originate from the topologically distinguished one-parameter subfamilies of the three-parameter unfoldings of the ``tears of the heart'' polycycle. More precisely, we will define a set of so called shift-exp-Liouvillian 3-parameter families of vector fields with a ``tears of the heart'' polycycle at parameter value zero. In a one-parameter subfamily where the ``heart'' is preserved, we have two sequences $(i_n), (e_n)$ of parameter values that correspond to saddle connections between the external saddles~$E$ or~$I$ and the saddle~$L$. When written in the double-logarithmic chart, these sequences become exponentially perturbed arithmetic progressions.

\begin{remark} We will show in Lemma~\ref{lem:exp} that the  relative scale coefficient of the perturbed arithmetic progressions $(i_n), (e_n)$, defined in terms of the coefficients and bases of the progressions, is equal to the  relative scale coefficient of the family defined in the introduction in terms of the characteristic numbers and coefficients $B_j, C_j$ of the original vector field with the ``tears of the heart'' polycycle.
\end{remark}

For a pair of our shift-exp-Liouvillian families, the equivalence of the pairs $((i_n),(e_n))$ and $((\tilde i_n), (\tilde e_n))$ will imply that the bases of these exponentially perturbed arithmetic progressions coincide:
\[\lambda = \tilde\lambda, \quad (\lambda^2\mu)^{-1} = (\tilde{\lambda}^2\tilde{\mu})^{-1}.\] 

\begin{remark}
The set of shift-exp-Liouvillian families is distinguished by a condition on the parameters of the family, or rather of the vector field with the ``tears of the heart'' polycycle. The condition resembles the one in the definition of Liouvillian numbers, and, similarly to the case of numbers, it defines a topologically generic subset in the space of coefficients, but of zero Lebesgue measure.
\end{remark}

The proof of the main result can now be split into the following three lemmas.
\begin{lemma*} For a topologically generic 3-parameter family that unfolds the ``tears of the heart'' polycycle, the sequences $(i_n), (e_n)$ are exponentially perturbed arithmetic progressions.
\end{lemma*}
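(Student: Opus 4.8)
The plan is to reduce the statement to the generalized asymptotic formula of Lemma~\ref{lem:taylor} by restricting the $3$-parameter unfolding to a canonical one-parameter subfamily, and then to apply that lemma twice. Throughout, as in the sketch above, the sequences $(i_n),(e_n)$ are understood in the double-logarithmic chart $\eps\mapsto\ln(-\ln\eps)$.

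\emph{Step 1: the heart-preserving subfamily.} Since the family is transverse to $T$ (this is part of topological genericity), the three splitting functions of the separatrix connections of $\gamma_2$ form, jointly, a submersion of the base onto $\mathbb{R}^3$ near $\alpha=0$. Hence the locus $\ell$ where the two connections constituting ``the heart'' are preserved is a smooth curve through $\alpha=0$, and along $\ell$ the remaining connection --- the separatrix loop $\gamma_1$ of $L$ --- is broken with nonzero speed, for otherwise $T_0\ell$ would lie in the kernel of the joint differential, which is trivial. Parametrize $\ell$ near $\alpha=0$ by $\eps$, the value on $\ell$ of the splitting function of $\gamma_1$; then $\eps$ is a legitimate $C^2$ parameter, oriented so that $\eps>0$ is the ``monodromic side'' on which sparkling connections occur. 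Restricting the unfolding to $\ell$ produces a local one-parameter family in which only the loop $\gamma_1$ is broken, and with nonzero speed; by definition $(i_n)$ and $(e_n)$ are the $\eps$-values along $\ell$ at which the connections $I\leftrightarrow L$ and $E\leftrightarrow L$ occur.

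\emph{Step 2: apply the generalized asymptotics.} Topological genericity also guarantees that $\lambda$ and $\mu$ are irrational. I would then invoke Lemma~\ref{lem:taylor} twice, exactly as in the note following it. For $(i_n)$ the relevant monodromic polycycle is $\gamma_1$ itself (a homoclinic loop of $L$, monodromic from inside), with its single edge broken and with the interior saddle $I$ whose separatrix winds onto $\gamma_1$ in negative time; the product of characteristic numbers is $\nu_1=\lambda<1$, irrational, so Lemma~\ref{lem:taylor} gives
\[
\ln(-\ln i_n)=-n\ln\lambda+\beta_1+\theta_1\lambda^n+o(\lambda^n),\qquad n\to\infty,
\]
with $\beta_1,\theta_1$ the constants supplied by the lemma. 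For $(e_n)$ the relevant polycycle is the whole ``tears of the heart'' $\gamma_2$, monodromic from the outside, with the exterior saddle $E$ whose separatrix winds onto $\gamma_2$ in positive time; passing to reversed time puts this into the configuration of Lemma~\ref{lem:ass1}, replaces every characteristic number by its reciprocal, and in particular turns the product $\lambda^2\mu$ (the saddle $L$ being met twice along $\gamma_2$) into $\nu_2=(\lambda^2\mu)^{-1}<1$, still irrational. Hence Lemma~\ref{lem:taylor}, applied in reversed time, yields
\[
\ln(-\ln e_n)=-n\ln\nu_2+\beta_2+\theta_2\,\nu_2^{\,n}+o(\nu_2^{\,n}),\qquad n\to\infty .
\]
In both cases the standing assumptions $\lambda<1$, $\lambda^2\mu>1$ place the base in $(0,1)$, so by the very definition the sequences $\bigl(\ln(-\ln i_n)\bigr)_n$ and $\bigl(\ln(-\ln e_n)\bigr)_n$ are exponentially perturbed arithmetic progressions, with bases $\lambda$ and $\nu_2=(\lambda^2\mu)^{-1}$ respectively. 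This is the assertion.

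\emph{Where the difficulty lies.} With Lemma~\ref{lem:taylor} already available, the whole substance of the argument is in Step~1: one must be certain that the heart-preserving locus is genuinely a one-parameter family meeting all hypotheses of Lemma~\ref{lem:taylor} --- exactly one connection broken, broken with nonzero speed, irrational characteristic numbers along the polycycle --- which is precisely what transversality together with the genericity hypothesis is designed to provide. The only other point requiring care is that applying Lemma~\ref{lem:taylor} to $\gamma_2$ really is legitimate and produces base $(\lambda^2\mu)^{-1}$; this is the reversed-time bookkeeping already explained in Section~\ref{sec:improved} (each Dulac factor is inverted, so the product $\lambda^2\mu$ becomes $\nu_2$). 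Everything else is a direct citation.
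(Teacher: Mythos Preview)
Your proposal is correct and follows essentially the same approach as the paper: the paper dispatches this lemma in one line as a reformulation of the generalized Lemma~\ref{lem:taylor} (proved in Section~\ref{sec:gen_case}) applied with $\Lambda=\lambda$ for $(i_n)$ and $\Lambda=(\lambda^2\mu)^{-1}$ for $(e_n)$, the latter via the reversed-time observation of Section~\ref{sec:improved}. Your Step~1, making explicit that transversality to $T$ forces the heart-preserving locus to be a smooth curve along which the loop breaks with nonzero speed, spells out what the paper leaves implicit but adds nothing new methodologically.
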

This is just a reformulation of Lemma~\ref{lem:ass1} in the general case; we proved it in Section~\ref{sec:gen_case}.
\begin{lemma} \label{lem:exp} Let $((i_n), (e_n))$ and $((\tilde i_n),(\tilde e_n))$ be the pairs of exponentially perturbed arithmetic progressions generated by two shift-exp-Liouvillian families. If these pairs are topologically equivalent at infinity,
then the bases of the sequences $(i_n)$ and $(\tilde i_n)$ coincide, the bases of $(e_n)$ and $(\tilde e_n)$ coincide, and the logarithms of their relative scale coefficients coincide modulo the logarithm of the base of~$(i_n)$ and~$(\tilde{i}_n)$.
\end{lemma}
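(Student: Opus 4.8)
\textbf{Proof plan for Lemma~\ref{lem:exp}.}

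The plan is to bootstrap from the coarser invariants that are already available. By the previous subsection, a topological equivalence of the pairs $((i_n),(e_n))$ and $((\tilde i_n),(\tilde e_n))$ forces the relative densities to coincide, i.e.\ $A=\tilde A$, and the normalized differences of the free terms to agree modulo $(1,A)$; since $A$ is irrational, Corollary~\ref{cor:shifts} then pins down the linking homeomorphism $h$ on the two sequences: $h(i_n)=\tilde i_{n+s}$ and $h(e_m)=\tilde e_{m+p}$ for uniquely determined integers $s,p$. So the whole proof reduces to extracting consequences of these two exact index shifts from the exponentially perturbed expansions
$$
i_n=A n+\tau_i+\xi\lambda^n+o(\lambda^n),\qquad e_n=\gamma n+\delta_e+\psi\nu_2^{\,n}+o(\nu_2^{\,n}),
$$
and their tilded counterparts, where $\gamma=A\ln\nu_2$ type normalizations are understood (here $\nu_1=\lambda$, $\nu_2=(\lambda^2\mu)^{-1}$).

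First I would handle the bases. Consider the sequence of differences $i_n-\tilde i_{n+s}$. Since $h$ is a homeomorphism of a neighborhood of $+\infty$ fixing the combined order type of the two interleaved sequences, and the $e$-points accumulate the $i$-points in a controlled way, the difference $i_n-\tilde i_{n+s}\to 0$ as $n\to\infty$ — this is already forced by the matching of densities and shifts at the level of slightly perturbed progressions. Plugging in the expansions and using $A=\tilde A$ and $\tau_i-\tilde\tau_i=As-p$ (so the constant terms cancel after the shift), we get $\xi\lambda^n-\tilde\xi\tilde\lambda^{\,n+s}+o(\lambda^n)+o(\tilde\lambda^{\,n})\to 0$ at an exponential rate; comparing the exponential rates of decay forces $\lambda=\tilde\lambda$ unless both leading coefficients vanish, and in the generic (shift-exp-Liouvillian) case $\xi\neq 0$, $\tilde\xi\neq0$. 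The same argument applied to $e_m-\tilde e_{m+p}$ gives $\nu_2=\tilde\nu_2$. Together with $\lambda=\tilde\lambda$ this yields $\mu=\tilde\mu$, which is exactly the base-coincidence claim (and, once recast via the identities in the introductory remarks, gives Theorem~\ref{thm:inv2}).

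Next, with $\lambda=\tilde\lambda$, $\nu_2=\tilde\nu_2$, $A=\tilde A$ in hand, I would compare the \emph{exact} exponential coefficients along the linking homeomorphism. The subtle point is that $h$ need not be close to a translation; but on the two distinguished sequences it \emph{is} a translation by $s$ (resp.\ $p$), and the key is to evaluate $h$ on an $i$-point and read off the constraint coming from its position relative to the nearby $\tilde e$-points, or vice versa. Writing out $i_n=h^{-1}(\tilde i_{n+s})$ and using that $h$ must also respect the finer ($\lambda^n$-level) spacing between consecutive $i$- and $e$-points forces an identity relating $\xi,\psi$ to $\tilde\xi,\tilde\psi$ with a correction governed by the integer shifts and the base $\nu_2$. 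Concretely, I expect the relation to take the form
$$
\psi\nu_2^{\,\tau}-\xi \;=\; \nu_2^{\,s}\bigl(\tilde\psi\tilde\nu_2^{\,\tilde\tau}-\tilde\xi\bigr),
$$
i.e.\ the relative scale coefficients differ by the factor $\nu_2^{\,s}$; taking logarithms gives coincidence modulo $\ln\nu_2$, which is the assertion. The main obstacle is precisely this last step: translating "$h$ is a homeomorphism respecting the interleaving down to $o(\lambda^n)$" into an exact algebraic relation between the exponential coefficients, since $h$ is only known to be continuous (not smooth) and only pinned down exactly on the sequences themselves. I would control this by squeezing: bound $h$ near each $i_n$ between the two maps $g_\eps,h_\eps$-type envelopes from Lemma~\ref{lem:est} and Lemma~\ref{lem:taylor}, pass to the double-logarithmic chart where everything becomes an exponentially perturbed progression with explicit $\beta,\theta$, and match the $\theta\lambda^n$ terms; the identification of $\theta$ with $-e^{-\beta}\frac{\ln C}{1-\lambda}$ from Lemma~\ref{lem:ass1}, together with the algebra already recorded in the Remark following Theorem~\ref{thm:inv3}, then converts the $\theta$-matching into the stated relation for $\Xi$.
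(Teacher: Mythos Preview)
Your plan has a genuine gap at the very first step for the bases. You assert that ``$i_n-\tilde i_{n+s}\to 0$ as $n\to\infty$ --- this is already forced by the matching of densities and shifts at the level of slightly perturbed progressions.'' It is not. Topological equivalence gives you only $h(i_n)=\tilde i_{n+s}$ for an \emph{arbitrary} order-preserving homeomorphism $h$; nothing forces $h$ to be close to the identity. In fact $\alpha=-\ln\lambda$ and $\tilde\alpha=-\ln\tilde\lambda$, so the statement $\alpha=\tilde\alpha$ is \emph{equivalent} to the conclusion $\lambda=\tilde\lambda$ you are trying to prove; if it fails, then $i_n-\tilde i_{n+s}=(\alpha-\tilde\alpha)n+O(1)$ diverges linearly, and your ``compare exponential rates of decay'' step never gets off the ground. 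The same circularity infects the $e$-sequence argument. Your later idea of squeezing $h$ between $g_\eps,h_\eps$-type envelopes conflates two unrelated objects: those envelopes bound the \emph{monodromy map} of a single family, not the linking homeomorphism between parameter spaces of two different families.

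The paper's argument avoids metric information about $h$ entirely. From $h(i_n)=\tilde i_{n+s}$, $h(e_m)=\tilde e_{m+p}$ and order-preservation one extracts only the combinatorial equivalence $i_n<e_m\Leftrightarrow \tilde i_{n+s}<\tilde e_{m+p}$. Rewriting both inequalities and using that the left-hand sides agree (by the choice of $s,p$), this becomes a non-inclusion
\[
A-\tfrac{m}{n}\ \notin\ \tfrac{1}{\gamma n}\bigl(u+[Q_1(n,m),\,Q_2(n,m)]\bigr),
\]
where along ``good'' pairs $(n,m(n))$ one has $Q_1\sim\Xi\,\lambda^n$ and $Q_2\sim\tilde\Xi\,\tilde\Lambda^{s}\,\tilde\lambda^{\,n}$ (up to a harmless factor $\gamma/\tilde\gamma$). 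Here the shift-exp-Liouvillian hypothesis is \emph{not} the nondegeneracy ``$\xi\neq0$'' you invoke; it is a Liouville-type approximability condition guaranteeing, for every rational $q\in[0.5,1)\cup(1,2]$, infinitely many $(n,m)$ with $A-\tfrac{m}{n}$ landing in $\tfrac{1}{\gamma n}\bigl(u+[q^2\Xi\lambda^n,\,q\Xi\lambda^n]\bigr)$. If $\lambda\neq\tilde\lambda$ or $\Xi\neq\tilde\Lambda^{s}\tilde\Xi$, one can choose $q$ so that this small window sits inside $[Q_1,Q_2]$ for all large $n$, contradicting the non-inclusion. That simultaneously gives $\lambda=\tilde\lambda$ (hence $\nu_2=\tilde\nu_2$ via $A=\tilde A$) and $\ln\Xi\equiv\ln\tilde\Xi\pmod{\ln\nu_2}$ in one stroke; there is no separate ``bases first, then coefficients'' phase.
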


\begin{lemma} \label{lem:generic} The set of shift-exp-Liouvillian families is residual in the class $\mathrm{T}_3^\pitchfork$ of all 3-parameter unfoldings of the ``tears of the heart'' polycycle.
\end{lemma}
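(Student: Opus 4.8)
The plan is to exhibit the set of shift-exp-Liouvillian families as a countable intersection of subsets of $\mathrm{T}_3^\pitchfork$, each open and dense, and then invoke Baire's theorem ($\mathrm{T}_3^\pitchfork$ being a Baire space in its natural topology). By construction, membership in this set is imposed by a countable list of requirements indexed by $N\in\mathbb N$: for each $N$ there must exist integers $p,q$ with $q\ge N$ satisfying an explicit strict inequality $\Phi_{p,q}$ involving only the finitely many moduli of the central vector field $v_0$, namely the characteristic numbers $\lambda,\mu$ (hence the relative density $A=\frac{-\ln\lambda}{\ln(\lambda^2\mu)}$) and the coefficients $B_1,B_2,C_1,C_2$ entering $\beta_j$, $\theta_j$ and the shift. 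Thus the set equals $\bigcap_N U_N$ with $U_N=\bigcup_{q\ge N}\bigcup_{p}\{V\in\mathrm{T}_3^\pitchfork:\Phi_{p,q}(V)\}$. Openness of $U_N$ is the easy half: the six moduli depend smoothly on $V\in\mathrm{T}_3^\pitchfork$ --- via the finitely-smooth normal forms for the Dulac maps of~\cite{IYa} and the smooth dependence of the regular transition maps on the field --- so each $\Phi_{p,q}$, being a finite system of strict inequalities in these quantities, cuts out an open set, and a union of open sets is open.

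The substantive half is density. Given $V_0\in\mathrm{T}_3^\pitchfork$ and $N$, I would perturb $V_0$ by moving only its central vector field $v_0$ inside the Banach submanifold $T$; since transversality to $T$ is an open condition, the perturbed family stays in $\mathrm{T}_3^\pitchfork$, and it can be kept $C^\infty$-close to $V_0$. The geometric input is that the moduli map $v_0\mapsto(\lambda,\mu,B_1,B_2,C_1,C_2)$ is a submersion along $T$, so these six numbers may be prescribed arbitrarily within a small ball around their initial values, all the defining constraints of $\mathrm{T}_3^\pitchfork$ ($\lambda<1$, $\lambda^2\mu>1$, hyperbolicity, no extra connections) being open. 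Then it is a Liouvillian-type tuning: freeze $\mu,B_1,B_2,C_1,C_2$ near their initial values, so that the shift $\sigma$ in $\Phi_{p,q}$ becomes a fixed continuous function of $\lambda$ on a short interval $J$, while $A(\lambda)$ has nonzero derivative. For every large $q$ the continuous map $\lambda\mapsto qA(\lambda)-\sigma(\lambda)$ then sweeps an interval of length $\gg1$ and hence takes some integer value $p$ in its interior; choosing $q\ge N$ and then $\lambda_*\in J$ with $A(\lambda_*)$ irrational and $|qA(\lambda_*)-p-\sigma(\lambda_*)|$ below the (exponentially small in $q$) tolerance of $\Phi_{p,q}$ --- an open, nonempty constraint on $\lambda_*$ near an exact solution of $qA-\sigma=p$, nonempty because irrationals are dense --- places the perturbed family in $U_N$. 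Keeping $A$ irrational ensures the hypotheses of Lemma~\ref{lem:exp} and Theorem~\ref{thm:inv1} stay available.

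The step I expect to be the main obstacle is exactly the submersivity of the moduli map $v_0\mapsto(\lambda,\mu,B_1,B_2,C_1,C_2)$ along $T$: one has to verify that these six quantities can be varied independently, in arbitrary directions, by an arbitrarily small perturbation of a tears-of-the-heart vector field. This is in the spirit of the standard realization lemmas for the Ilyashenko functional setting --- $T$ is cut out precisely by the polycycle conditions, and the six numbers are free moduli --- but carrying it out honestly (producing explicit local deformations that change $\lambda$, $\mu$, the Dulac coefficients $C_j$ and the separatrix positions $B_j$ one at a time) is where real work is needed. Everything else reduces to the elementary intermediate-value argument above together with Baire's theorem.
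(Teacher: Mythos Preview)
Your strategy---writing the shift-exp-Liouvillian condition as a countable intersection of open dense sets and invoking Baire---is exactly what the paper does. One structural point you missed in reading the definition: it quantifies not only over $N$ but also universally over every rational $q\in[0.5,1)\cup(1,2]$; the requirement is that for \emph{each} such $q$ and each $N$ there exist $m,n>N$ with the stated inclusion. So the correct decomposition is $\bigcap_{N,q}U_{N,q}$ (together with the single open dense condition $\{\Xi\neq0\}$), not just $\bigcap_N U_N$. Your variables $p,q$ play the role of the paper's $m,n$, and the rational scaling parameter $q$ is an additional countable index you omitted. This does not damage the scheme, since rationals in that range are countable.

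For density the paper takes a slightly different and cleaner route than yours. Rather than moving $\lambda$---which simultaneously shifts the base of the exponential tolerance $\lambda^n$, the step $\gamma=\ln(\lambda^2\mu)$, and hence $u$ and $\Xi$---the paper first perturbs only $\mu$: this moves $A$ while keeping $\lambda$ fixed, landing $A$ in the target set $Q^+_{N,q}$ computed with the \emph{original} constants. It then makes a second small perturbation of $C_1,C_2,B_1$ (leaving $\lambda,\mu$ untouched) to force the ratios $u/\gamma$ and $\Xi/\gamma$ of the perturbed family back to their original values. Since $Q^+_{N,q}$ depends only on $(\lambda,\,u/\gamma,\,\Xi/\gamma)$, this two-step perturbation places the family in $U_{N,q}$ without any coupled dependencies to track. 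Your single-parameter intermediate-value argument in $\lambda$ would also go through, but the bookkeeping is messier because all the auxiliary constants drift with $\lambda$. The submersivity of the moduli map that you flag as the main obstacle is indeed taken for granted in the paper as well.
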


We will prove Lemma~\ref{lem:exp} in Section~\ref{sec:exp}, and Lemma~\ref{lem:generic} will be proven in Section~\ref{sec:generic}.
Now we can prove Theorems~\ref{thm:inv2} and~\ref{thm:inv3} modulo the three lemmas above.

\begin{proof}[Proof of Theorems~\ref{thm:inv2} and~\ref{thm:inv3}] Let two shift-exp-Liouvillian families be moderately topologically equivalent. Then the pairs of sequences $((i_n),(e_n))$ and $((\tilde i_n), (\tilde e_n))$ generated by these families are topologically equivalent. Then Lemma~\ref{lem:exp} yields $\lambda=\tilde\lambda$. Since the parameter $A = \frac {-\ln \lambda}{\ln (\lambda^2 \mu)}$ must be the same for the equivalent families, the equality $\mu=\tilde\mu$ follows.
This completes the proof of Theorem~\ref{thm:inv2}.
Likewise, by Lemma~\ref{lem:exp} the logarithms of the relative scale coefficients coincide modulo~$\ln\lambda$, which yields Theorem~\ref{thm:inv3}.
\end{proof}

\subsection{Shift-exp-Liouvillian three-parameter families}

Recall that $\mathrm{T}_3^\pitchfork$ is the set of three-parameter unfoldings of vector fields with ``tears of the heart'' polycycles and that the number
$$\Xi=\Bigl(\frac{\ln C_1}{1-\nu_1}-\ln B_1\Bigr)^{-1}\Bigl(\frac{\ln C_2}{1-\nu_2}-\frac{\ln C_1}{1-\nu_1}\Bigr)$$
is called the relative scale coefficient of the family. We assume that the corresponding sequences $(i_n), (e_n)$ have the form
\[
i_n = \alpha n + \beta +\xi \nu_1^n + o(\nu_1^n), \quad e_n = \gamma n + \delta + \psi \nu_2^n + o(\nu_2^n), \quad n\to+\infty.
\]

{\bf Definition.}  A three-parameter family from $\mathrm{T}_3^\pitchfork$ is called \emph{shift-exp-Liouvillian} if it satisfies the following conditions.
\begin{itemize}
\item The relative scale coefficient $\Xi$ is not zero.
\item The ratio $A = \frac {-\ln \lambda}{\ln (\lambda^2 \mu)} = \frac{\alpha}{\gamma}$ is irrational. Moreover, for every positive rational number $q\in[0.5,1)\cup(1,2]$ there exist arbitrarily large $m,n \in \mathbb{N}$ such that \footnote{Here and below we denote by $[a, b]$ the segment between the points $a$ and~$b$ without regard to whether $a\le b$ or vice versa.}
$$
A-\frac{m}{n}\in\frac{u+[\,q^2\Xi\lambda^n,\;q \Xi\lambda^n]}{\gamma n},
$$
where $u=\delta-\beta$.
\end{itemize} 

\subsection{Shift-exp-Liouvillian families have two additional invariants}  \label{sec:exp}

We can prove Lemma~\ref{lem:exp} now.

\begin{proof}[Proof of Lemma~\ref{lem:exp}]
    
Consider two shift-exp-Liouvillian families. In the double logarithmic coordinate, the sequences of values of $\varepsilon$ that correspond to the saddle connections that we consider have the form
\[
 i_n = \alpha n + \beta+\xi \lambda^{n}+R_1(n), \quad
 e_m = \gamma m +\delta+\psi \Lambda^{m} +R_2(m),
\]
\[
\tilde i_n = \tilde\alpha n +\tilde\beta+\tilde \xi \tilde \lambda ^{n}+R_3(n), \quad
\tilde e_m = \tilde\gamma m +\tilde\delta+\tilde\psi \tilde \Lambda^{m} +R_4(m),
\]
where $\Lambda=(\lambda^2\mu)^{-1}$, $R_1(n)=o(\lambda^{n})$, $R_2(m)=o(\Lambda^{m})$, $R_3(n)=o(\tilde \lambda^{n})$, $R_4(m)=o(\tilde \Lambda^{m})$. The explicit formulas for $\psi,\xi$ were obtained above in Lemma~\ref{lem:taylor}.

According to Corollary \ref{cor:shifts} there are shifts $p,s$ such that the homeomorphism $h$ that establishes the equivalence of the pairs satisfies
\[
h(i_n) = \tilde{i}_{n+s}, \quad
h(e_m) = \tilde{e}_{m+p}.
\]
Since $h$ preserves the order, for every relevant $n,m$ we must have
\[i_n < e_m\]
if and only if
\[\tilde{i}_{n+s} < \tilde{e}_{m+p}.\]
Substituting the explicit expressions of $i_n, e_m, \tilde{i}_{n+s}, \tilde{e}_{m+p}$ and rearranging the terms, we get an equivalent statement:
\[
\gamma\Bigl(A n+\frac{\beta-\delta}{\gamma}-m\Bigr) < \bigl(\psi \Lambda^{m} - \xi \lambda^{n}\bigr) + R_1(n)-R_2(m)
\]
must hold if and only if
\[
\tilde\gamma\Bigl(A(n+s)+\frac{\tilde\beta-\tilde\delta}{\tilde\gamma}-m-p\Bigr) < \bigl(\tilde\psi \tilde\Lambda^{m+p} - \tilde\xi \tilde\lambda^{n+s}\bigr) + R_3(n+s)-R_4(m+p).
\]
By the choice of $p,s$ in Corollary \ref{cor:shifts} we have
\[
A n+\frac{\beta-\delta}{\gamma}-m = A(n+s)+\frac{\tilde\beta-\tilde\delta}{\tilde\gamma}-m-p,
\]
so the equivalence of the two inequalities above implies that
\[
An+\frac{\beta-\delta}{\gamma}-m \notin \Bigl(\tfrac{1}{\gamma}\bigl(\psi \Lambda^{m}-\xi \lambda^{n}+R_1(n)-R_2(m)\bigr),\ \tfrac{1}{\tilde\gamma}\bigl(\tilde\psi \tilde\Lambda^{m+p}-\tilde\xi \tilde\lambda^{n+s}+R_3(n+s)-R_4(m+p)\bigr)\Bigr),
\]
or, equivalently,
\begin{equation}\label{eq:notin}
A-\frac{m}{n}\notin \frac{\delta-\beta+(Q_1(n,m),Q_2(n,m))}{\gamma n},
\end{equation}
where
\[
Q_1(n, m)=\psi \Lambda^{m}-\xi \lambda^{n}+R_1(n)-R_2(m),
\]
\[
Q_2(n, m)=\frac{\gamma}{\tilde \gamma}\bigl(\tilde\psi \tilde\Lambda^{m+p}-\tilde\xi \tilde\lambda^{n+s}+R_3(n+s)-R_4(m+p)\bigr).
\]

The endpoints of the segment $[Q_1(n,m),Q_2(n,m)]$ exponentially converge to zero as $n,m\to+\infty$; hence there exists $R>1$ such that for all sufficiently large $n,m$ we have
\[
[Q_1(n,m),Q_2(n,m)]\subset[-R^{-\min(n,m)},\;R^{-\min(n,m)}].
\]
If $(n,m)$ is such that $|A-m/n|> \mathrm{const}$ and $n$ is large, then non-inclusion~\eqref{eq:notin} is automatically satisfied.
It is therefore natural to consider only those pairs for which
\[
\left|A-\frac{m}{n}+\frac{\tau}{n}\right|<\frac{1}{n^2},
\qquad\text{where }\tau=\frac{\beta-\delta}{\gamma},
\]
since only for such pairs can condition~\eqref{eq:notin} possibly fail for large $n$. Let us call such pairs \emph{good}. For large $n$ there is at most one $m=m(n)$ making the pair good.

\begin{proposition}
There exist constants $\Xi_1, \Xi_2 \ne 0$ such that for good pairs $(n,\;m(n))$ we have
\[
Q_1(n,m(n))\sim \Xi_1\lambda^{n},\qquad Q_2(n,m(n))\sim \Xi_2\tilde\lambda^{n}, \quad n\to+\infty.
\]
\end{proposition}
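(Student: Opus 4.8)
The plan is to substitute the good-pair relation $m=m(n)=An+\tau+\eta_n$ with $|\eta_n|<1/n$ into the explicit formulas for $Q_1$ and $Q_2$, and to keep track of which terms survive at the exponential order $\lambda^{n}$ (resp.\ $\tilde\lambda^{n}$). The algebraic identity that drives everything is that the relative density $A=\alpha/\gamma$ equals $\ln\nu_1/\ln\nu_2=\ln\lambda/\ln\Lambda$, where $\Lambda=\nu_2=(\lambda^2\mu)^{-1}$; consequently $\Lambda^{An}=\lambda^{n}$ \emph{exactly}. The same holds for the second family: the equivalence has already forced $\tilde A=A$ (this was used above, via Corollary~\ref{cor:shifts}), so $\tilde\Lambda^{An}=\tilde\lambda^{n}$ and, for any integer $s$, $\tilde\Lambda^{As}=\tilde\lambda^{s}$. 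Note also that a good pair has $m\to+\infty$ together with $n$ (because $A>0$), so that the remainders $R_2(m)=o(\Lambda^{m})$ and $R_4(m+p)=o(\tilde\Lambda^{m+p})$ are legitimately available.

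First I would treat $Q_1$. Since $|\eta_n\ln\Lambda|<|\ln\Lambda|/n\to0$, the good-pair relation gives
\[
\Lambda^{m}=\Lambda^{An}\,\Lambda^{\tau}\,\Lambda^{\eta_n}=\lambda^{n}\,\Lambda^{\tau}\,(1+O(1/n)),
\]
so in particular $\Lambda^{m}\asymp\lambda^{n}$, whence $R_2(m)=o(\Lambda^{m})=o(\lambda^{n})$ and also $R_1(n)=o(\lambda^{n})$. Therefore
\[
Q_1(n,m(n))=\psi\Lambda^{m}-\xi\lambda^{n}+R_1(n)-R_2(m)=\bigl(\psi\Lambda^{\tau}-\xi\bigr)\lambda^{n}+o(\lambda^{n}),
\]
which identifies $\Xi_1=\psi\Lambda^{\tau}-\xi$, i.e.\ the relative scale coefficient of the pair $(i_n,e_n)$. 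To see $\Xi_1\neq0$, set $P_j=\frac{\ln C_j}{1-\nu_j}-\ln B_j$; from $\tau=\frac{\beta-\delta}{\gamma}=\frac{\ln P_1-\ln P_2}{\gamma}$ and $\gamma=-\ln\Lambda$ one gets $\Lambda^{\tau}=P_2/P_1$, and substituting $\xi=\theta_1=-P_1^{-1}\frac{\ln C_1}{1-\nu_1}$ and $\psi=\theta_2=-P_2^{-1}\frac{\ln C_2}{1-\nu_2}$ from Lemma~\ref{lem:taylor} yields
\[
\Xi_1=-\frac{1}{P_1}\Bigl(\frac{\ln C_2}{1-\nu_2}-\frac{\ln C_1}{1-\nu_1}\Bigr)=-\Xi\neq0,
\]
the last inequality because the family is shift-exp-Liouvillian.

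Then $Q_2$ goes the same way, with the integer shifts $p,s$ of Corollary~\ref{cor:shifts} carried along. Writing $m+p=An+(\tau+p)+\eta_n$ and using $\tilde\Lambda^{An}=\tilde\lambda^{n}$ one gets $\tilde\Lambda^{m+p}=\tilde\lambda^{n}\tilde\Lambda^{\tau+p}(1+O(1/n))$, while $\tilde\lambda^{n+s}=\tilde\lambda^{s}\tilde\lambda^{n}$ and $R_3(n+s),R_4(m+p)=o(\tilde\lambda^{n})$; hence
\[
Q_2(n,m(n))=\frac{\gamma}{\tilde\gamma}\bigl(\tilde\psi\tilde\Lambda^{\tau+p}-\tilde\xi\tilde\lambda^{s}\bigr)\tilde\lambda^{n}+o(\tilde\lambda^{n}).
\]
The relation between $\tau$ and $\tilde\tau$ recorded in the proof above, $An+\tau-m=A(n+s)+\tilde\tau-(m+p)$, i.e.\ $\tau+p=\tilde\tau+As$, together with $\tilde\Lambda^{As}=\tilde\lambda^{s}$, gives $\tilde\psi\tilde\Lambda^{\tau+p}-\tilde\xi\tilde\lambda^{s}=\tilde\lambda^{s}\bigl(\tilde\psi\tilde\Lambda^{\tilde\tau}-\tilde\xi\bigr)$, i.e.\ $\tilde\lambda^{s}$ times the relative scale coefficient of the pair $(\tilde i_n,\tilde e_n)$, which by the same one-line simplification equals $-\tilde\lambda^{s}\tilde\Xi$. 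Thus $\Xi_2=-\frac{\gamma}{\tilde\gamma}\tilde\lambda^{s}\tilde\Xi\neq0$. As a byproduct this verifies the identity announced in the Remark: the relative scale coefficient of a generated pair of progressions coincides, up to sign, with the relative scale coefficient $\Xi$ of the family.

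The hard part will not be conceptual but a matter of care. The whole argument hinges on the bound $|\eta_n|<1/n$ being tight enough that the multiplicative factor $\Lambda^{\eta_n}=1+O(1/n)$ perturbs $\psi\Lambda^{m}$ by only an additive $O(\lambda^{n}/n)=o(\lambda^{n})$ — which is precisely why ``good pairs'' were defined with the exponent $2$ in $|A-m/n+\tau/n|<1/n^2$ — and on correctly threading the signs of the integer shifts $p,s$ through the identities $\Lambda^{An}=\lambda^{n}$, $\tilde\Lambda^{As}=\tilde\lambda^{s}$ and $\tau+p=\tilde\tau+As$. A secondary point, already settled in Lemma~\ref{lem:taylor}, is that $B_j$ must be chosen so that $P_j$ stays bounded away from $0$, so that $\ln P_1$, $\ln P_2$ and the quotients above are meaningful.
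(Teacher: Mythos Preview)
Your argument is correct and follows exactly the same route as the paper: substitute the good-pair relation $m=An+\tau+O(1/n)$, use $\Lambda^{A}=\lambda$ (and $\tilde\Lambda^{A}=\tilde\lambda$) to convert $\Lambda^{m}$ into $\lambda^{n}\Lambda^{\tau}(1+o(1))$, absorb the remainders $R_j$ as $o(\lambda^{n})$ or $o(\tilde\lambda^{n})$, and read off $\Xi_1=\psi\Lambda^{\tau}-\xi$ and $\Xi_2=\frac{\gamma}{\tilde\gamma}\tilde\lambda^{s}(\tilde\psi\tilde\Lambda^{\tilde\tau}-\tilde\xi)$. Your explicit tracking of the identity $\tau+p=\tilde\tau+As$ and of the factor $\tilde\Lambda^{As}=\tilde\lambda^{s}$ is in fact a bit more careful than the paper's presentation, and your sign $\Xi_1=-\Xi$ (coming from the minus in the formula for $\theta$ in Lemma~\ref{lem:taylor}) is consistent with that lemma; the paper silently drops this sign when quoting $\xi,\psi$, but this is immaterial to the proposition since only $\Xi_1,\Xi_2\neq0$ is needed.
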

\begin{proof}
Write $m=An+\tau+O(1/n)$. Put $R(n,m)=R_1(n)-R_2(m)$. Since $\Lambda^A = \lambda$, we have
\[
\begin{aligned}
Q_1(n,m)&=\psi \Lambda^{m}-\xi \lambda^{n}+R(n,m)\\
&=\psi \Lambda^{An+\tau+O(1/n)}-\xi\lambda^{n}+R(n,m)\\
&=\psi \lambda^n\Lambda^{\tau}(1+o(1))-\xi\lambda^{n}+o(\lambda^n)\\
&=\bigl(\psi\Lambda^{\tau}-\xi\bigr)\lambda^n+o(\lambda^n).
\end{aligned}
\]
In the notation of Section~\ref{sect:taylor} we have $\nu_1=\lambda$, $\nu_2=\Lambda$ and
\[
\xi=\frac{\ln C_1}{\ln C_1-(1-\nu_1)\ln B_1},\qquad
\psi=\frac{\ln C_2}{\ln C_2-(1-\nu_2)\ln B_2},
\]
\[
\tau=\frac{\ln\!\bigl(\tfrac{\ln C_1}{1-\nu_1}-\ln B_1\bigr)-\ln\!\bigl(\tfrac{\ln C_2}{1-\nu_2}-\ln B_2\bigr)}{-\ln\nu_2}.
\]
Then we have
\[
\nu_2^{\tau}=\frac{\tfrac{\ln C_2}{1-\nu_2}-\ln B_2}{\tfrac{\ln C_1}{1-\nu_1}-\ln B_1},
\]
and hence
\[
\psi\nu_2^{\tau}-\xi=\Bigl(\frac{\ln C_1}{1-\nu_1}-\ln B_1\Bigr)^{-1}\Bigl(\frac{\ln C_2}{1-\nu_2}-\frac{\ln C_1}{1-\nu_1}\Bigr).
\]
Since we are dealing with shift-exp-Liouvillian families, we have $\Xi_1=\psi\Lambda^{\tau}-\xi = \Xi\neq0$, and the asymptotic for $Q_1$ follows.

For $Q_2(n,m(n))$ the same computation (with tilded quantities and the shifts $p,s$) yields
\[
Q_2(n,m)=\frac{\gamma}{\tilde\gamma}\bigl(\tilde\psi\tilde\Lambda^{m+p}-\tilde\xi\tilde\lambda^{n+s}+\tilde R(n,m)\bigr)
=\Xi_2\,\tilde\lambda^{n}+o(\tilde\lambda^{n}),
\]
with
\[
\Xi_2=\frac{\gamma}{\tilde\gamma}\bigl(\tilde\psi\tilde\Lambda^{\tilde\tau}-\tilde\xi\bigr)\tilde\lambda^{s}\neq0.
\]
This proves the proposition.
\begin{remark} Note that $\Xi_2=\frac{\gamma}{\tilde\gamma}\bigl(\tilde\psi\tilde\Lambda^{\tilde\tau}-\tilde\xi\bigr)\tilde\lambda^{s}$ differs from the relative scale coefficient of the second family $\tilde \Xi = \bigl(\tilde\psi\tilde\Lambda^{\tilde\tau}-\tilde\xi\bigr)$  only by a factor $\tilde\lambda^{s}{\gamma}/{\tilde\gamma}$. 
\end{remark}
\end{proof}

Thus, for the good pairs $(n,m)$ we have 
\[
[Q_1(n,m),\;Q_2(n,m)] = [\Xi_1\lambda^n+o(\lambda^n),\ \Xi_2\tilde\lambda^n+o(\tilde\lambda^n)], \quad n \to +\infty.
\]

Assume that $\lambda\neq\tilde\lambda$ or $\Xi_1\neq\Xi_2$. For some rational number  $q\in[0.5,1)\cup(1,2]$ and all sufficiently large $n$ we have \footnote{For the case of positive $\Xi_i$ one can put $q=0.5$ provided $\tilde \lambda<\lambda$ and $q=2$ for  $\tilde \lambda>\lambda$; for $\tilde \lambda=\lambda$ and  $\Xi_1\neq \Xi_2$ one can take $q\in((\Xi_2/\Xi_1)^{1/4},(\Xi_2/\Xi_1)^{1/3})$. The other cases are now obvious.}
\[
[q^2\Xi_1\lambda^{n},q\Xi_1\lambda^n]\subset [Q_1(n,m),\;Q_2(n,m)] = [\Xi_1\lambda^n+o(\lambda^n),\ \Xi_2\tilde\lambda^n+o(\tilde\lambda^n)].
\]

Since the family is shift-exp-Liouvillian, there are infinitely many $n$ for which the pair $(n,m(n))$ violates~\eqref{eq:notin}.
If two pairs of sequences were equivalent at infinity, this would be impossible. Therefore, for equivalent pairs (and families) we must have
\[
\lambda=\tilde\lambda, \qquad \Xi_1= \Xi_2.
\]
Note that $\lambda=\tilde\lambda$ implies $\Lambda = \tilde\Lambda$ and $\gamma=\tilde \gamma$, and thus $\Xi_2=\tilde \lambda^{s}\tilde \Xi = \lambda^{s}\tilde \Xi$. Hence $\ln \Xi=\ln\tilde \Xi+s\ln \lambda$.
This is exactly the statement of Lemma~\ref{lem:exp}.
\end{proof}

\subsection{Shift-exp-Liouvillian families are generic} 
\label{sec:generic}
Let us prove Lemma~\ref{lem:generic}.

\begin{proof}[Proof of Lemma~\ref{lem:generic}]
The condition $\Xi\neq 0$ can be rewritten as
\[
\ln C_1 \cdot \bigl(1-(\lambda^2\mu)^{-1}\bigr)\neq \ln C_2 \cdot (1-\lambda).
\]
Let $U_{0,0}$ be the set of vector fields in $\mathrm{T}_3^\pitchfork$ for which, in canonical coordinates on the transversal, this condition holds. The openness and density of $U_{0,0}$ are obvious. 

Define the subsets of $Q^+_{N,q}(\gamma,u,\Xi,\lambda) \subset\mathbb R$ by
\[
Q^+_{N,q}(\gamma,u,\Xi,\lambda)=\bigcup_{n,m>N}\left(\frac{m\gamma+u+q^2 \Xi \lambda^{\,n}}{\gamma n},\ \frac{m\gamma+u+q\Xi\lambda^{\,n}}{\gamma n}\right),
\]
with $\Xi\neq0,\lambda>0$.

Each $Q^+_{N,q}(\gamma,u,\Xi,\lambda)$ is open as a union of open intervals. We show it is dense in $\mathbb{R}$. Take an arbitrary interval $[c-r,c+r]$. Choose a point of the form $\frac{m\gamma+u}{\gamma N_r}$ inside $(c-r/2,c+r/2)$ with denominator $N_r$ so large that $\max(q^2 \Xi \lambda^{\,N_r},q \Xi \lambda^{\,N_r})<\gamma r/2$ and $N_r>N$. Then
\[
\left(\frac{m\gamma+u+q^2 \Xi \lambda^{\,N_r}}{\gamma n},\ \frac{m\gamma+u+q\Xi\lambda^{\,N_r}}{\gamma n}\right)\subset[c-r,c+r],
\]
so every interval contains points of $Q^+_{N,q}(\gamma,u,\Xi,\lambda)$. 

Let $U_{N,q}$ be the set of three-parameter families in $\mathrm{T}_3^\pitchfork$ for which the constant $A$ belongs to $Q^+_{N,q}(\gamma,u,\Xi,\lambda)$. Each $U_{N,q}$ is open and dense in $\mathrm{T}_3^\pitchfork$.

Openness is immediate: mall perturbations of a family change its parameters only slightly,\footnote{Note that the canonical chart depends
continuously on the family of vector fields, see Theorem~A in P.~Berger and
B.~Reinke, arXiv:2210.05256, 2022.} so an inclusion
\[
A\in \left(\frac{m\gamma+u+q^2 \Xi \lambda^{\,n}}{\gamma n},\ \frac{m\gamma+u+q\Xi\lambda^{\,n}}{\gamma n}\right),
\]
persists under small perturbations.

To prove density, note that by an arbitrarily small perturbation we can change $\mu$ so that 1) $\lambda$ remains the same, and 2) the constant $A$ of the perturbed family falls into $ Q^+_{N,q}(\gamma,u,\Xi,\lambda)$, where $\gamma,u,\Xi,\lambda$ are the constants of the original family. Then, by a second small perturbation that adjusts $C_1,C_2,B_1$ (without touching the saddle characteristic numbers) we may arrange that the constants $u/\gamma,\;\Xi/\gamma$ of the perturbed family coincide with the original $u/\gamma,\;\Xi/\gamma$. After these two arbitrarily small perturbations, the resulting family lies in $U_{N,q}$. Hence $U_{N,q}$ is dense.

Therefore the intersection $\bigcap_{N,q} U_{N,q}\cap U_{0,0}$ is the desired residual set that consists of shift-exp-Liouvillian families.
\end{proof}

\section*{Funding}
The study was implemented in the framework of the Basic Research Program at the HSE University in 2025, project №~075-00648-25-00 ``Symmetry. Information. Chaos''.

\section*{Conflicts of interest}

The authors of this work declare that they have no conflicts of interest.

\medskip
\noindent
{\large \bf Yu. S. Ilyashenko,\\}
HSE University, Moscow, Russia\\
E-mail: \texttt{\tt yulijs@gmail.com}

\medskip
\noindent
{\large \bf Stanislav Minkov,\\}
Brook Institute of Electronic Control Machines, Moscow, Russia\\
E-mail: \texttt{stanislav.minkov@yandex.ru}

\medskip
\noindent
{\large \bf Ivan Shilin,\\}
HSE University, Moscow, Russia\\
E-mail: \texttt{i.s.shilin@yandex.ru}

\end{document}